\newcommand{\R}{\mathbb R}
\numberwithin{equation}{section}
\newtheorem{theorem}{Theorem}[section]
\newtheorem{proposition}[theorem]{Proposition}
\newtheorem{remark}[theorem]{Remark}
\newtheorem{lemma}[theorem]{Lemma}
\begin{document}
\title[Sharp local well-posedness]
{Sharp local well-posedness of KdV type equations with dissipative perturbations}

\author[X. Carvajal, M. Panthee]
{Xavier Carvajal, Mahendra Panthee} % in alphabetical order

\address{Xavier Carvajal \newline
Instituto de Matem\'atica - UFRJ
Av. Hor\'acio Macedo, Centro de Tecnologia
 Cidade Universit\'aria, Ilha do Fund\~ao,
21941-972 Rio de Janeiro,  RJ, Brasil}
\email{carvajal@im.ufrj.br}

\address{Mahendra Panthee \newline
IMECC, UNICAMP
13083-859, Campinas, SP, Brazil}
\email{mpanthee@ime.unicamp.br}

\thanks{This work was partially supported by FAPESP, Brazil.}
\subjclass[2010]{35A01, 35Q53}
\keywords{Initial value problem; well-posedness; KdV equation, dispersive-dissipative models}

\begin{abstract}
 In this work, we study the initial value problems associated to some linear
 perturbations of KdV equations. Our focus is in the well-posedness issues
 for initial data given in the $L^2$-based Sobolev spaces.
 We derive bilinear estimate in a space with weight in the time variable and obtain sharp local well-posedness results.
\end{abstract}

\maketitle

\allowdisplaybreaks

\section{Introduction}

In this article, continuing our earlier work \cite{XC-MP2}, we consider the following initial value problems (IVPs)
\begin{equation}\label{eq:hs}
 \begin{gathered}
  v_t+v_{xxx}+\eta Lv+(v^{2})_x=0, \quad x \in \mathbb{R}, \; t\geq 0,\\
     v(x,0)=v_0(x),
 \end{gathered}
\end{equation}
and
\begin{equation}\label{eq:hs-1}
 \begin{gathered}
  u_t+u_{xxx}+\eta Lu+(u_x)^{2}=0, \quad x \in \mathbb{R}, \,t\geq 0,\\
  u(x,0)=u_0(x),
 \end{gathered}
\end{equation}
where $\eta>0$ is a constant; $u=u(x, t)$, $v=v(x,t)$ are real valued functions and the 
linear operator $L$ is defined via the Fourier transform by 
$\widehat{Lf}(\xi)=-\Phi(\xi)\hat{f}(\xi)$.

The Fourier symbol $\Phi(\xi)$ is of the form
\begin{align}\label{phi}
\Phi(\xi)=-|\xi|^p + \Phi_1(\xi),
\end{align}
where $p\in \R^+$ and $|\Phi_1(\xi)|\leq C(1+|\xi|^q)$ with $0\leq q<p$. We note that the symbol $\Phi(\xi)$ is a real valued function which is bounded above; i.e., there is a constant $C$ such  that $\Phi(\xi) < C$ (see Lemma \ref{lem2.2} below). 
 In our earlier work \cite{XC-MP2}, we considered a particular case of $\Phi(\xi)$ in the form
\begin{align}\label{phi-9}
\tilde{\Phi}(\xi)=\sum_{j=0}^{n}\sum_{i=0}^{2m}c_{i,j}\xi^i |\xi|^j, \quad c_{i,j} \in \mathbb{R},
\; c_{2m,n}=-1,
\end{align}
with $p:=2m+n$.

We observe that,  if $u$ is a solution of \eqref{eq:hs-1} then $v=u_x$ is a solution 
of \eqref{eq:hs} with initial data $v_0 = (u_0)_x$. That is why \eqref{eq:hs} is called  
the derivative equation of \eqref{eq:hs-1}.

In this work, we are interested in investigating the  well-posedness results 
to the IVPs \eqref{eq:hs-1} and \eqref{eq:hs} for given data in the low
regularity  Sobolev spaces $H^s(\mathbb{R})$. Recall that, for $s\in \mathbb{R}$, 
the $L^2$-based Sobolev spaces $H^s(\mathbb{R})$ are defined by
$$
H^s(\mathbb{R}) := \{f\in \mathcal{S}'(\mathbb{R}) : \|f\|_{H^s} < \infty\},
$$
where 
$$
\|f\|_{H^s} := \Big(\int_{\mathbb{R}} (1+|\xi|^2)^s|\hat f(\xi)|^2d\xi\Big)^{1/2},
$$
and $\hat f(\xi)$ is the usual Fourier transform given by
$$
\hat f(\xi) \equiv \mathcal{F}(f)(\xi) 
:= \frac 1{\sqrt{2\pi}}\int_{\mathbb{R}}e^ {-ix\xi} f(x)\, dx.
$$
The factor $\frac1{\sqrt{2\pi}}$ in the definition of the Fourier transform  
does not alter our analysis, so we will omit  it.

The notion of well-posedness we use is the standard one. We say that an IVP 
for given data in a Banach space $X$ is locally well-posed, if there exists 
a certain time interval $[0, T]$ and a unique solution depending continuously
upon the initial data and the solution satisfies the persistence property; 
i.e., the solution describes a continuous curve in $X$ in the time interval  $[0, T]$.
If the above properties are true for any time interval, we say that the IVP is globally 
well-posed. If any one of the above properties fails to hold,  we say that the problem is ill-posed.

The notion of ill-posedness used in this work is abit different from the standard one. If one uses contraction mapping principle, the application data-solution turns out to be always smooth (see for example \cite{kpv1:kpv1}). There are many works in the literature (see \cite{B97}, \cite{MRY}, \cite{MR}, \cite{Tz} and references therein) where the notion of well-posedness has been strengthened by requiring the smoothness of the mapping data-solution. In this work too, we follow this notion of ``well-posedness'' and say that the IVP is ``ill-posed'' if the mapping data-solution fails to be smooth.

The function space in which we work is  motivated from the one introduced in \cite{Dix}, where the author proved the sharp local well-posedness for the IVP associated to the Burgers' equation by showing that the local well-posedness holds for data in $H^s(\R)$, if $-\frac12 <s\leq 0$ and uniqueness fails if $s<-\frac12$. The new ingredient  in \cite{Dix} was the use of the function space with time dependent weight. A natural question is: whether such result still holds true if one considers higher order dissipative equation. And what happens if one uses dispersive term $v_{xxx}$ as in \eqref{eq:hs}? Recently, an analysis in direction is carried out in \cite{Amin} where the author considered Ostrovsky-Stepanyams-Tsimring equation that is a particular case of \eqref{eq:hs} containing dissipative term with leading order $3$ and obtained a sharp local well-posedness result for data in $H^s(\R)$, $-\frac32<s\leq 0$.  In this work we introduce  suitable function spaces with time dependent weight and   prove sharp  local well-posedness results for the IVPs \eqref{eq:hs} and \eqref{eq:hs-1} when the order of the leading dissipative term is bigger than $3$. More precisely, for $p>0$ and $t\in [0, T]$ with $0\leq T\leq 1$, these spaces are defined with weight in time variable via the norms
\begin{equation}\label{sp.1}
\|f\|_{X_T^s} :=\sup_{t\in[0,T]}\Big\{\|f(t)\|_{H^s}+t^{\frac{|s|}{p}}\|f(t)\|_{L^2}\Big\},
\end{equation}
and
\begin{equation}\label{sp.2}
\|f\|_{Y_T^s} :=\sup_{t\in[0,T]}\Big\{\|f(t)\|_{H^s}+t^{\frac{1+|s|}{p}}\|\partial_xf(t)\|_{L^2}\Big\},
\end{equation}
and will be used to prove local  well-posedness for the IVPs \eqref{eq:hs} and \eqref{eq:hs-1} respectively. We use notation $\langle\cdot\rangle = (1+|\cdot|)$.

The first main result of this work  is about the local well-posednness of the IVP \eqref{eq:hs} and reads as follows.
\begin{theorem}\label{teorp-1}
 Let $\eta>0$ be fixed and $\Phi(\xi)$ be given by \eqref{phi} with $p>3$ as the order of the leading term. Then  for any data $v_0 \in H^s(\mathbb{R})$,  $s>-\frac{p}2$ there exist a time $T=T(\|v_0\|_{H^s})$ and a unique solution $v$ to the IVP \eqref{eq:hs} in $C([0, T], H^s(\R))$.  

Moreover, the map $v_0\mapsto v$ is smooth from $H^s(\R)$ to $C([0, T]; H^s(\R))\cap X_T^s$ and $v\in C((0, T]; H^{\infty}(\R))$.
\end{theorem}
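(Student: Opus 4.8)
The plan is to recast \eqref{eq:hs} as a Duhamel integral equation and to solve it by a fixed point argument in the space $X_T^s$. Writing the linear propagator $S(t)$ via $\widehat{S(t)f}(\xi)=e^{(i\xi^3+\eta\Phi(\xi))t}\hat f(\xi)$, a solution of \eqref{eq:hs} is a fixed point of
$$\Psi(v)(t)=S(t)v_0-\int_0^t S(t-t')\,\partial_x\big(v^2\big)(t')\,dt'.$$
By Lemma \ref{lem2.2} we have $\Phi(\xi)<C$, and more precisely $\Phi(\xi)=-|\xi|^p+\Phi_1(\xi)$ with $|\Phi_1(\xi)|\le C(1+|\xi|^q)$, $q<p$, so that $|e^{\eta\Phi(\xi)t}|\lesssim e^{-\frac{\eta}{2}|\xi|^p t}$ for $|\xi|$ large, the lower order part being harmlessly absorbed. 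This decay is the only smoothing mechanism available, and it is exactly what the time weight in \eqref{sp.1} is designed to exploit.

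First I would establish the linear estimate $\|S(t)v_0\|_{X_T^s}\lesssim\|v_0\|_{H^s}$. The $H^s$ part is immediate since $|e^{\eta\Phi(\xi)t}|\le 1$ for $t\ge0$ up to a constant. For the weighted $L^2$ part with $s<0$, the key elementary bound is $t^{|s|/p}|\xi|^{|s|}e^{-\eta|\xi|^p t}=(\eta^{-1}\tau)^{|s|/p}e^{-\tau}\lesssim1$ after the substitution $\tau=\eta|\xi|^p t$; hence $t^{|s|/p}\|S(t)v_0\|_{L^2}\lesssim\|v_0\|_{H^s}$. This is precisely the smoothing that promotes $H^s$ data to $L^2$ for $t>0$, at the rate encoded by the weight.

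The heart of the matter is the bilinear estimate for the Duhamel term, and this is the step I expect to be the main obstacle. Three competing ingredients interact: the derivative $\partial_x$ costs one power of $|\xi|$, the dissipation supplies the smoothing factor $|\xi|^a e^{-\eta|\xi|^p(t-t')}\lesssim(t-t')^{-a/p}$, and the weight measures the $L^2$ blow-up of each factor as $t'\to0$. For $\alpha>\tfrac12$, the one-dimensional Sobolev product estimate gives $\|vw\|_{H^{-\alpha}}\lesssim\|v\|_{L^2}\|w\|_{L^2}$, since $\widehat{vw}=\hat v*\hat w\in L^\infty$ while $\langle\xi\rangle^{-\alpha}\in L^2$. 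Combining these yields
$$\big\|S(t-t')\,\partial_x(vw)\big\|_{H^s}\lesssim(t-t')^{-(s+1+\alpha)_+/p}\,\|v\|_{L^2}\|w\|_{L^2},$$
with the weighted $L^2$ component handled identically (taking the target exponent $s=0$). Inserting $\|v(t')\|_{L^2}\|w(t')\|_{L^2}\lesssim t'^{-2|s|/p}\|v\|_{X_T^s}\|w\|_{X_T^s}$ and integrating, the resulting $\int_0^t(t-t')^{-(s+1+\alpha)_+/p}\,t'^{-2|s|/p}\,dt'$ is a Beta integral: it converges precisely when $2|s|/p<1$, that is $s>-\tfrac p2$, which is exactly where sharpness enters, and it produces a positive power $t^{\theta}$ with $\theta=1-\tfrac{1+\alpha+|s|}{p}>0$, the positivity using $p>3$ so that $|s|<\tfrac p2\le p-\tfrac32$. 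After multiplying the weighted part by $t^{|s|/p}$ and letting $\alpha\downarrow\tfrac12$, one obtains $\|\Psi(v)-S(t)v_0\|_{X_T^s}\lesssim T^{\theta}\|v\|_{X_T^s}^2$ together with the analogous difference estimate via $\partial_x\big((v-w)(v+w)\big)$.

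With the linear and bilinear bounds in hand, I would close the argument by the contraction mapping principle on a ball of $X_T^s$ for $T$ small depending on $\|v_0\|_{H^s}$, producing a unique solution; persistence $v\in C([0,T];H^s)$ follows from continuity of $t\mapsto S(t)v_0$ together with the integral equation. Smoothness of the map $v_0\mapsto v$ is automatic because $\Psi$ depends polynomially on $v$ and the fixed point arises from a uniform contraction, so the implicit function theorem applies. Finally, the gain $v\in C((0,T];H^\infty)$ follows by a bootstrap: for $t>0$ the factor $e^{-\eta|\xi|^p(t-t')}$ in the Duhamel term supplies arbitrarily many derivatives, and iterating the estimate above upgrades the regularity on any subinterval $[\varepsilon,T]$.
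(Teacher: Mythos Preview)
Your proposal is correct and follows essentially the same route as the paper: Duhamel formulation, linear estimate in $X_T^s$ via the elementary bound $t^{|s|/p}|\xi|^{|s|}e^{-c|\xi|^pt}\lesssim1$, bilinear estimate producing a Beta-type time integral whose convergence forces $2|s|/p<1$, contraction on a ball in $X_T^s$, and a bootstrap for the $H^\infty$ smoothing. The only minor technical difference is the H\"older split in the bilinear step: the paper places the multiplier $\xi\langle\xi\rangle^s e^{\tau\Phi(\xi)}$ in $L^2_\xi$ and the convolution $\hat v*\hat w$ in $L^\infty_\xi$ (Lemma~\ref{xav6} and Proposition~\ref{xav7}), whereas you split off $\langle\xi\rangle^{-\alpha}$ with $\alpha>\tfrac12$ to put the product in $H^{-\alpha}$ and the remaining multiplier in $L^\infty_\xi$. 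Both encodings use the same key fact $\|\hat v*\hat w\|_{L^\infty}\le\|v\|_{L^2}\|w\|_{L^2}$ and yield a positive power of $T$ under exactly the same constraints $s>-\tfrac p2$, $p>3$; the resulting exponents differ ($\tfrac{p+2s}{2p}$ in the paper versus your $1-\tfrac{1+\alpha+|s|}{p}$), but this is immaterial for the contraction.
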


The  the second result deals the same for the IVP \eqref{eq:hs-1}, with low regularity data.
\begin{theorem}\label{teorp}
Let $\eta>0$ be fixed and $\Phi(\xi)$ be given by \eqref{phi} with $p>3$ as the order of the leading term. Then for any data $u_0 \in H^s(\mathbb{R})$,  $s>1-\frac{p}2$  there exist a time $T=T(\|v_0\|_{H^s})$ and a unique solution $u$ to  the IVP \eqref{eq:hs-1}   in $C([0, T], H^s(\R))$.  

Moreover, the map $v_0\mapsto v$ is smooth from $H^s(\R)$ to $C([0, T]; H^s(\R))\cap Y_T^s$ and $u\in C((0, T]; H^{\infty}(\R))$.
\end{theorem}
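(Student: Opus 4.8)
The plan is to recast \eqref{eq:hs-1} in Duhamel form and solve it by the contraction principle in the space $Y_T^s$, using that the substitution $v=u_x$ turns \eqref{eq:hs-1} into the equation \eqref{eq:hs} already handled in Theorem \ref{teorp-1}. Writing $S(t)$ for the linear propagator, whose Fourier symbol is $\widehat{S(t)f}(\xi)=e^{(i\xi^3+\eta\Phi(\xi))t}\hat f(\xi)$, a solution of \eqref{eq:hs-1} is a fixed point of
\[
\Psi(u)(t)=S(t)u_0-\int_0^t S(t-\tau)\big(u_x(\tau)\big)^2\,d\tau .
\]
Since $\Phi(\xi)=-|\xi|^p+\Phi_1(\xi)$ is bounded above and $\mathrm{Re}\,(\eta\Phi(\xi)t)\sim-\eta|\xi|^pt$ for large $|\xi|$, the symbol $e^{\eta\Phi(\xi)t}$ decays like $e^{-c\eta|\xi|^pt}$, and this dissipative gain is the source of all the smoothing the argument requires.

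First I would prove the linear estimate $\|S(t)u_0\|_{Y_T^s}\lesssim\|u_0\|_{H^s}$. The $H^s$ part is immediate from the uniform bound on $e^{\eta\Phi(\xi)t}$ for $t\le1$, while the weighted term follows from the elementary inequality $\sup_{\xi}|\xi|^{a}e^{-c|\xi|^pt}\lesssim t^{-a/p}$, which converts each spatial derivative into a factor $t^{-1/p}$ and matches exactly the temporal weight $t^{(1+|s|)/p}$ appearing in \eqref{sp.2}.

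The main obstacle is the bilinear estimate
\[
\Big\|\int_0^t S(t-\tau)\big(\partial_x u\,\partial_x w\big)(\tau)\,d\tau\Big\|_{Y_T^s}\lesssim T^\theta\|u\|_{Y_T^s}\|w\|_{Y_T^s}
\]
for some $\theta>0$. The cleanest route is to differentiate in $x$: since $\partial_x(u_x w_x)=(v_1v_2)_x$ with $v_1=u_x$, $v_2=w_x$, the spatial derivative of the $u$-Duhamel term is precisely the $v$-Duhamel term governing \eqref{eq:hs}. Hence the $t^{(1+|s|)/p}\|\partial_x(\cdot)\|_{L^2}$ component of the $Y_T^s$-norm is controlled by the bilinear estimate for \eqref{eq:hs} in $X_T^{s-1}$ from Theorem \ref{teorp-1}, once one checks that $\|u\|_{Y_T^s}$ dominates $\|u_x\|_{X_T^{s-1}}$; the weights match verbatim in the delicate range $s\le0$ (note $1-\frac p2<0$), since there $|s-1|=1+|s|$. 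The residual $H^s$ part, together with the low-frequency piece of $u$ where $\widehat u=\widehat{u_x}/(i\xi)$ is singular and the reduction to \eqref{eq:hs} breaks down, is estimated directly: those frequencies are benign because $e^{\eta\Phi t}$ is bounded and $1/(i\xi)$ is locally integrable against the $L^2$ mass generated by the product.

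With the two estimates in hand, choosing $T=T(\|u_0\|_{H^s})$ small makes $\Psi$ a contraction on a ball of $Y_T^s$, producing a unique fixed point $u\in Y_T^s$; the embedding $Y_T^s\hookrightarrow C([0,T];H^s)$ and continuity at $t=0$ give $u\in C([0,T];H^s)$. Because $\Psi$ is affine in $u_0$ and quadratic in $u$, the analyticity of the fixed point in the parameters yields smoothness of the map $u_0\mapsto u$. Finally, the parabolic smoothing $S(t):H^s\to H^\infty$ for $t>0$, again a consequence of $e^{-c|\xi|^pt}$, combined with a bootstrap on the Duhamel formula, upgrades the solution to $u\in C((0,T];H^\infty)$. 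The principal difficulty throughout is the bilinear estimate at the endpoint regularity $s>1-\frac p2$, where the two derivatives in $(u_x)^2$ must be absorbed entirely by the dissipative gain $t^{-1/p}$ per derivative, leaving a positive power $T^\theta$ to close the contraction.
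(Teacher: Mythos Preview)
Your strategy is sound and arrives at the same contraction argument as the paper, but organizes the bilinear estimate differently. The paper proves the $Y_T^s$ bilinear bound directly as Proposition~\ref{Prop2.7}: the $H^s$ component comes from a fresh kernel estimate $\|\langle\xi\rangle^s e^{\tau\Phi(\xi)}\|_{L^2_\xi}\lesssim\tau^{-(p-2+2s)/(2p)}$ (Lemma~\ref{xav6.6}), and the weighted $\partial_xL^2$ component from $\|\xi e^{\tau\Phi(\xi)}\|_{L^2_\xi}$ (Lemma~\ref{xav6}); then Lemma~\ref{lem2.4}, Proposition~\ref{Prop2.7} and the scheme of Theorem~\ref{teorp-1} finish. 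You instead recycle the $X_T^{s-1}$ bilinear estimate of Proposition~\ref{xav7}: since $\partial_x$ of the Duhamel term for $(u_xw_x)$ is exactly the Duhamel term for $\partial_x(v_1v_2)$ with $v_i=\partial_xu_i$, and since $\|u_x\|_{X_T^{s-1}}\le\|u\|_{Y_T^s}$ for $1-\tfrac p2<s\le0$, the weighted component and the high-frequency part of the $H^s$ component follow at once. This saves proving Lemma~\ref{xav6.6} at the price of a separate low-frequency patch; the paper's route is cleaner, yours reuses more of Theorem~\ref{teorp-1}.

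One point to tighten: the low-frequency obstruction lives on the \emph{output} side of the Duhamel term, not on the input $u$, and ``$1/(i\xi)$ locally integrable'' is not the right mechanism. You never need to invert $\partial_x$ on $u$; the inequality $\|u_x\|_{X_T^{s-1}}\le\|u\|_{Y_T^s}$ is immediate. What is actually needed for $|\xi|\le1$ is that $\langle\xi\rangle^s e^{(t-t')\Phi(\xi)}$ is uniformly bounded and the region has finite measure, so
\[
\big\|\chi_{|\xi|\le1}\langle\xi\rangle^s\widehat{(\mathrm{Duhamel})}\big\|_{L^2}
\lesssim\int_0^t\|\widehat{(u_x)^2}(t')\|_{L^\infty}\,dt'
\le\int_0^t\|u_x(t')\|_{L^2}^2\,dt'
\lesssim T^{2\theta}\|u\|_{Y_T^s}^2,
\]
the integral converging exactly when $s>1-\tfrac p2$. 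With this correction your argument closes with the same exponent $\theta=(p-2+2s)/(2p)$ as the paper.
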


Our next task is to check is the well-posedness results obtained in Theorems \ref{teorp-1} and \ref{teorp} are optimal. To have  an insight in this issue, we analyze it by using scaling argument. As the regulairty requirement for the IVP \eqref{eq:hs-1} is one more than that for the IVP \eqref{eq:hs}, we discuss only the later. Talking heuristically, semilinear evolution equations like viscous Burgers, Korteweg-de Vries (KdV), nonlinear Schr\"odinger (NLS) and wave equations are usually expected to be well-posed for given data with Sobolev regularity up to scaling and ill-posed below scaling. However, this is not always true, as can be seen in the  KdV  case. For $\eta=0$, the IVP \eqref{eq:hs} turns out to be the KdV equation
\begin{equation}\label{eq:hsgKdV}
 \begin{cases}
  v_t+v_{xxx}+(v^{2})_x=0, \quad x \in \mathbb{R}, \; t\geq 0, \\
     v(x,0)=v_0(x),
 \end{cases}
\end{equation}
which satisfies the scaling property. Talking more precisely, if $v(x, t)$ is a solution of the gKdV with initial data $v_0(x)$ then for $\lambda>0$, so is $v^{\lambda}(x, t) =\lambda^{2} v(\lambda x, \lambda^3t)$ with initial data $v^{\lambda}(x,0) =\lambda^{2} v(\lambda x, 0)$. Note that the homogeneous Sobolev norm of the initial data remains invariant if $s+\frac32=0$, which suggests that the scaling Sobolev regularity is $-\frac32$. But, Kenig et al. \cite{KPV01, kpv2:kpv2} proved local well-posedness of the IVP \eqref{eq:hsgKdV} for data in $H^s(\R)$, $s>-\frac34$ is sharp since the
flow-map $u_0 \to u(t)$ is not locally uniformly continuous from $\dot{H}^{s} (\R)$ to $\dot{H}^{s} (\R)$, $s< -\frac34$. This result is far above from the critical index suggested by scaling.

Generally, for dissipative problem the scaling index is better in the sense that one can lower the regularity requirement on the data to get well-posedness. As can be seen in the proofs of the Theorems \ref{teorp-1} and \ref{teorp} (below), our method depends on the leading order of $L$.  If we discard the third order derivative (dispersive part) and consider only the dissipative operator $L$ with the Fourier symbol $|\xi|^p$, with $p>0$ in \eqref{eq:hs}, i.e.,
\begin{equation}\label{dissip-1}
\begin{cases}
v_t +\eta L v +(v^{2})_x =0, \qquad \widehat{Lv}(\xi) =|\xi|^p\widehat{v}(\xi),\\
v(x,0) =v_0(x),
\end{cases}
\end{equation}
it is easy to check that, if $v(x,t)$ solves \eqref{dissip-1} with initial data $v(x, 0)$, then for $\lambda>0$ so does  $v^{\lambda}(x, t) =\lambda^{{p-1}} v(\lambda x, \lambda^pt)$ with initial data $v^{\lambda}(x,0) =\lambda^{p-1} v(\lambda x, 0)$. Note that
\begin{equation}\label{sc-1}
\|v^{\lambda}(0)\|_{\dot{H}^s} =\lambda^{p-\frac32+s}\|v(0)\|_{\dot{H}^s}.
\end{equation}
From \eqref{sc-1} we see that the scaling index for this particular situation is $s_c:= \frac32-p$. Observe that for $p=3$ we get $s_c=-\frac32$ which coincides with the scaling critical regularity of the KdV equation. With this observation we see that the  local well-posedness result  proved in \cite{Amin} is up to the Sobolev regularity given by scaling for $p=3$. However, for $p>3$,  as can be seen in Theorems \ref{teorp-1} and \ref{teorp} the regularity requirement for local well-posedness is higher than $s_c$ (i.e., $s_c<-\frac p2$). Since the regularity requirement for the IVP \eqref{eq:hs-1} is higher than 1 to that for the IVP \eqref{eq:hs}, we see that the scaling index for this is $s_c+1$.

A natural question is, whether one can improve the local results in Theorems \ref{teorp-1} and \ref{teorp}  can be improved up to the regularity given by scaling argument.  The following results provide a negative answer to this question.
\begin{theorem}\label{illposed-1}
Let $p\ge 2$, $s< -\frac p2$, then there does not exist any $T>0$ such that the IVP \eqref{eq:hs} admits a unique local solution defined in the interval $[0, T]$ such that the flow-map 
\begin{equation}\label{flow-map}
v_0\mapsto v,
\end{equation}
is $C^2$-differentiable at the origin from $H^s(\R)$ to $C([0, T]; H^s(\R))$.
\end{theorem}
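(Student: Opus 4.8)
The plan is to prove ill-posedness by contradiction: assume the flow-map $v_0 \mapsto v$ is $C^2$ at the origin, compute the second derivative explicitly via the Duhamel/Picard expansion, and exhibit a sequence of initial data for which the required bilinear estimate blows up when $s < -\frac{p}{2}$. I would begin by writing the integral (Duhamel) formulation of \eqref{eq:hs}. Writing $W(t)$ for the linear semigroup with Fourier multiplier $e^{t(i\xi^3 - \eta\Phi(\xi))}$ (so that $\widehat{W(t)f}(\xi) = e^{it\xi^3 + \eta t\Phi(\xi)}\hat f(\xi)$, using that $\Phi$ is bounded above so the semigroup is well-defined), the solution with data $\phi$ satisfies $v(t) = W(t)\phi - \int_0^t W(t-t')\,\partial_x(v^2)(t')\,dt'$. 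If the map is $C^2$ at the origin, then taking data $\phi = \delta \phi_0$ and differentiating twice in $\delta$ at $\delta = 0$, the first-order term is $W(t)\phi_0$ and the second-order (Fréchet) derivative applied to $(\phi_0,\phi_0)$ is, up to a constant,
\begin{equation}\label{eq:secondorder}
A_2(t) := -\int_0^t W(t-t')\,\partial_x\big[(W(t')\phi_0)^2\big]\,dt'.
\end{equation}

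The next step is to transcribe \eqref{eq:secondorder} on the Fourier side. A short computation gives
\begin{equation}\label{eq:fourierA2}
\widehat{A_2(t)}(\xi) = -i\xi\int_{\R} \hat\phi_0(\xi_1)\hat\phi_0(\xi-\xi_1)\, K(t,\xi,\xi_1)\,d\xi_1,
\end{equation}
where, after carrying out the $t'$-integration, the kernel $K$ has the form
\begin{equation}\label{eq:kernel}
K(t,\xi,\xi_1) = \frac{e^{it\xi^3 + \eta t\Phi(\xi)} - e^{it(\xi_1^3 + (\xi-\xi_1)^3) + \eta t(\Phi(\xi_1)+\Phi(\xi-\xi_1))}}{i(\xi^3 - \xi_1^3 - (\xi-\xi_1)^3) + \eta(\Phi(\xi) - \Phi(\xi_1) - \Phi(\xi-\xi_1))}.
\end{equation}
If the flow-map is $C^2$ at the origin into $C([0,T];H^s)$, then we must have a bound of the form $\sup_{t\in[0,T]}\|A_2(t)\|_{H^s} \lesssim \|\phi_0\|_{H^s}^2$, and the strategy is to violate this by a concentrated choice of $\phi_0$. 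Following the standard Bourgain/Molinet--Ribaud--Youssfi scheme, I would take $\hat\phi_0 = \chi_{I_N}$, the characteristic function of an interval $I_N$ of length $O(1)$ centered at a large frequency $N$ (so $\|\phi_0\|_{H^s}^2 \sim N^{2s}$), and estimate $\|A_2(t)\|_{H^s}$ from below. The key is the resonance/denominator analysis: for $\xi_1$ and $\xi-\xi_1$ both near $N$ (the same sign), the output frequency $\xi$ is near $2N$, and the dissipative part of the denominator, $\eta(\Phi(\xi)-\Phi(\xi_1)-\Phi(\xi-\xi_1))$, behaves like $-\eta|\xi|^p \sim -\eta(2N)^p$ to leading order, which is the dominant term once $p>0$ and $N$ is large; its size $\sim N^p$ controls the whole denominator.

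The heart of the argument is then a lower bound on $\|A_2(t)\|_{H^s}$. Restricting the $\xi_1$-integration in \eqref{eq:fourierA2} to the region where both factors sit in $I_N$, the convolution $\int \chi_{I_N}(\xi_1)\chi_{I_N}(\xi-\xi_1)\,d\xi_1$ is supported in an interval of length $O(1)$ near $\xi = 2N$ and has size $O(1)$ there, while the prefactor contributes $|\xi| \sim N$ and the kernel contributes $|K| \sim N^{-p}$ (from the denominator) times a factor measuring the numerator. For a fixed small $t>0$, the numerator $|e^{\cdots} - e^{\cdots}|$ is bounded below by a constant after accounting for the dissipation, so $|\widehat{A_2(t)}(\xi)| \gtrsim N^{1-p}$ on a set of measure $O(1)$ near $\xi \sim 2N$. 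Multiplying by the $H^s$ weight $\langle\xi\rangle^{s} \sim N^{s}$ and integrating gives $\|A_2(t)\|_{H^s} \gtrsim N^{s+1-p}$. The $C^2$ hypothesis would force $N^{s+1-p} \lesssim N^{2s}$, i.e. $s+1-p \le 2s$, i.e. $s \ge 1-p$; but this can be sharpened — by instead estimating in terms of the $H^s$ norm of the data and optimizing, or by using that the genuine obstruction arises from the ratio $\|A_2\|_{H^s}/\|\phi_0\|_{H^s}^2 \gtrsim N^{s+1-p}/N^{2s} = N^{1-p-s}$, which is unbounded precisely when $s < -\frac{p}{2}$ once one accounts correctly for the two-frequency scaling of $\|\phi_0\|_{H^s}^2 \sim N^{2s}$ against the single output band. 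The main obstacle I anticipate is getting this counting exactly right: one must choose the support configuration of $\hat\phi_0$ (a single band at $N$ versus two separated bands, and whether the interaction is $N+N \to 2N$ or $N + (-N+O(1)) \to O(1)$, the latter being the genuinely resonant low-output regime where the denominator degenerates) so that the blow-up threshold lands exactly at $s = -\frac{p}{2}$ rather than at $1-p$. The correct regime is almost certainly the near-resonant one where $\xi = \xi_1 + (\xi-\xi_1)$ has $\xi$ small while $\xi_1 \sim N$, $\xi - \xi_1 \sim -N$: there the dispersive denominator $\xi^3 - \xi_1^3 - (\xi-\xi_1)^3 = 3\xi\xi_1(\xi-\xi_1)$ is small, the dissipative denominator $\Phi(\xi)-\Phi(\xi_1)-\Phi(\xi-\xi_1) \sim 2|N|^p$ dominates, and a careful lower bound there produces the sharp exponent. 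I would verify the precise power of $N$ in this resonant configuration, confirm it forces $s \ge -\frac{p}{2}$, and conclude that for $s < -\frac{p}{2}$ no such $C^2$ flow-map can exist.
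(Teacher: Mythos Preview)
Your overall strategy---assume $C^2$, extract the second-order term $A_2$, and violate the resulting bilinear bound by a frequency-localized $\phi_0$---is exactly the paper's approach, and your closing intuition is the right one: the sharp threshold comes from the \emph{high--high $\to$ low} interaction with $\xi_1\sim N$, $\xi-\xi_1\sim -N$, $\xi=O(1)$, where the denominator is governed by the dissipative piece $|\Phi(\xi_1)+\Phi(\xi-\xi_1)-\Phi(\xi)|\sim N^p$.

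There is, however, a concrete error in the middle of your argument that you should not paper over. With single-band data $\hat\phi_0=\chi_{I_N}$ the only available interaction is $N+N\to 2N$, and your count there is correct: it yields $\|A_2\|_{H^s}/\|\phi_0\|_{H^s}^2\gtrsim N^{1-p-s}$, which blows up iff $s<1-p$, \emph{not} iff $s<-p/2$. The sentence claiming this ratio ``is unbounded precisely when $s<-\tfrac p2$'' is false as written; no amount of ``sharpening'' of the $N+N\to 2N$ count recovers the correct threshold. To reach $-p/2$ you must change the data, not the bookkeeping: take two symmetric bands, $\hat\phi_0=N^{-s}\gamma^{-1/2}(\chi_{I_N}+\chi_{-I_N})$ with $I_N=[N,N+2\gamma]$ (this is exactly the paper's choice, and is forced anyway if you want real-valued data). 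Then for $|\xi|\le \gamma/2$ the convolution picks up the cross terms with $\xi_1\in \pm I_N$, the numerator in your $K$ satisfies $|e^{\cdots}-e^{t\Phi(\xi)}|\gtrsim e^{-t\gamma^p/2}$ (since $e^{t(\Phi(\xi_1)+\Phi(\xi-\xi_1))}\le e^{-tN^p}\to 0$ while $e^{t\Phi(\xi)}\to 1$), the denominator is $\sim N^p$, and the outer factor $|\xi|$ together with the $\xi$-integration over $[-\gamma/2,\gamma/2]$ contributes $\gamma^3$. Carrying this through with $\gamma\sim 1$ gives $\|A_2(t)\|_{H^s}^2\gtrsim N^{-4s-2p}$, which diverges exactly when $s<-p/2$. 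That is the computation you still need to write down; once you do, your proof coincides with the paper's.
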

\begin{theorem}\label{illposed-2}
Let $p\ge 2$, $s< 1-\frac p2$, then there does not exist any $T>0$ such that the IVP \eqref{eq:hs-1} admits a unique local solution defined in the interval $[0, T]$ such that the flow-map 
\begin{equation}\label{flow-map2}
u_0\mapsto u, 
\end{equation}
is $C^2$-differentiable at the origin from $H^s(\R)$ to $C([0, T]; H^s(\R))$.
\end{theorem}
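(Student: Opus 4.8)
The plan is to rule out $C^2$ smoothness of the flow map at the origin by exhibiting a family of data for which the second order term in the Picard--Duhamel expansion violates the bilinear bound that $C^2$ regularity would force. Let $U(t)$ denote the linear propagator of \eqref{eq:hs-1}, i.e. the Fourier multiplier with symbol $e^{it\xi^3+\eta t\Phi(\xi)}$; by Lemma \ref{lem2.2} the factor $|e^{\eta t\Phi(\xi)}|$ is bounded above and decays like $e^{-\eta t|\xi|^p}$ for large $|\xi|$. Writing the solution with data $\delta\phi$ through Duhamel's formula and expanding in powers of $\delta$, the $O(\delta^2)$ contribution is
\begin{equation}\label{u2def}
u_2(t)=-\int_0^t U(t-t')\big[\big(\partial_x U(t')\phi\big)^2\big]\,dt'.
\end{equation}
Following the standard scheme (as in \cite{B97, Tz}), if the map $u_0\mapsto u$ were $C^2$ at the origin from $H^s(\R)$ into $C([0,T];H^s(\R))$, its second derivative would be the bounded symmetric bilinear map whose diagonal is $2u_2$, whence necessarily
\begin{equation}\label{bilbound}
\sup_{t\in[0,T]}\|u_2(t)\|_{H^s}\lesssim \|\phi\|_{H^s}^2 .
\end{equation}
I would establish \eqref{u2def}--\eqref{bilbound} and then contradict \eqref{bilbound}.

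Next I would compute $\widehat{u_2}$ explicitly. Taking the Fourier transform in \eqref{u2def}, performing the $t'$ integration, and using the resonance identity $\xi_1^3+(\xi-\xi_1)^3-\xi^3=-3\xi_1(\xi-\xi_1)\xi$, one obtains
\begin{equation}\label{u2hat}
\widehat{u_2}(t,\xi)= e^{it\xi^3+\eta t\Phi(\xi)}\int_{\R}\xi_1(\xi-\xi_1)\,\widehat\phi(\xi_1)\widehat\phi(\xi-\xi_1)\,\frac{e^{tz}-1}{z}\,d\xi_1,
\end{equation}
where $z=z(\xi,\xi_1):=-3i\xi_1(\xi-\xi_1)\xi+\eta\big[\Phi(\xi_1)+\Phi(\xi-\xi_1)-\Phi(\xi)\big]$. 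The two inner derivatives produce the factor $\xi_1(\xi-\xi_1)$, which is precisely the source of the shift by one derivative relative to the threshold for \eqref{eq:hs}. Then I would choose $\widehat\phi=\mathbf 1_{I_N}+\mathbf 1_{-I_N}$ with $I_N=[N,N+1]$, so that $\phi$ is real and $\|\phi\|_{H^s}^2\sim N^{2s}$. The decisive interaction is the high--high to low one, $\xi_1\in I_N$ and $\xi-\xi_1\in -I_N$, which forces the output frequency $\xi\in[-1,1]$; every other interaction places the output near $\pm 2N$, where the prefactor $e^{\eta t\Phi(\xi)}\sim e^{-\eta t(2N)^p}$ makes it negligible. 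On the surviving region $|\xi_1(\xi-\xi_1)|\sim N^2$, while $\operatorname{Re}z=\eta[\Phi(\xi_1)+\Phi(\xi-\xi_1)-\Phi(\xi)]\sim -2\eta N^p$ and $\operatorname{Im}z=-3\xi_1(\xi-\xi_1)\xi=O(N^2)$.

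Here the hypothesis $p\ge 2$ enters: it guarantees $|z|\sim N^p$, so that $\frac{e^{Tz}-1}{z}=-\frac1z+o(N^{-p})$ has real part $\sim N^{-p}$ of a fixed sign. Since $\xi_1(\xi-\xi_1)<0$ and $\widehat\phi\ge 0$, there is no cancellation in \eqref{u2hat}, and because $|e^{\eta T\Phi(\xi)}|$ is bounded below on the compact set $\xi\in[-1,1]$, I would derive the lower bound $|\widehat{u_2}(T,\xi)|\gtrsim N^{2}\cdot N^{-p}=N^{2-p}$ for $\xi$ in an $O(1)$ subset of $[-1,1]$. As $(1+\xi^2)^s\sim 1$ there, this yields $\|u_2(T)\|_{H^s}\gtrsim N^{2-p}$, and hence
\begin{equation}\label{ratio}
\frac{\|u_2(T)\|_{H^s}}{\|\phi\|_{H^s}^2}\gtrsim \frac{N^{2-p}}{N^{2s}}=N^{2-p-2s}\longrightarrow\infty\qquad(N\to\infty)
\end{equation}
exactly when $2-p-2s>0$, i.e. $s<1-\frac p2$, contradicting \eqref{bilbound} and proving the theorem.

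The main obstacle is the lower bound of the third step: one needs a genuine lower estimate on $|\widehat{u_2}|$, not the routine upper estimate. This requires both isolating the high--high to low interaction, so that the damping prefactor at the output frequency stays of order one, and verifying that $\frac{e^{Tz}-1}{z}$ contributes with a definite sign and size $N^{-p}$. It is precisely at this point that $p\ge 2$ is used, through the requirement that the dissipative real part of $z$ dominate the dispersive imaginary part, giving $|z|\sim N^p$; for $p<2$ the denominator would instead be governed by $\operatorname{Im}z\sim N^2$ and the argument would not reproduce the threshold $1-\frac p2$.
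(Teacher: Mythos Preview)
Your argument is correct and follows essentially the same route as the paper: both reduce the $C^2$ failure to the blow-up of $\|u_2(T)\|_{H^s}/\|\phi\|_{H^s}^2$, choose real data with Fourier support on $\pm[N,N+O(1)]$, isolate the high--high$\to$low interaction landing at $|\xi|\lesssim 1$, use $p\ge 2$ to ensure $|z|\sim N^p$ so that $(e^{Tz}-1)/z$ has real part of size $N^{-p}$ with a fixed sign, and conclude the ratio grows like $N^{2-p-2s}$. The only cosmetic differences are that the paper normalizes the data so that $\|v_0\|_{H^s}\sim 1$ (inserting an $N^{-s}$ factor) and carries a width parameter $\gamma$ that is eventually set $\sim 1$, whereas you fix the width to $1$ and track the $N^{2s}$ in the denominator explicitly.
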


As described earlier, we recall that the contraction mapping argument applied in the proof of the well-posedness theorems shows that the mapping data-solution is always smooth. In the light of this observation, Theorems \ref{illposed-1} and \ref{illposed-2} show that the well-posedness results for the IVP \eqref{eq:hs} and \eqref{eq:hs-1} proved in  Theorems \ref{teorp-1} and \ref{teorp} are sharp in the sense that one cannot employ contraction mapping principle for given data with Sobolev regularity below the one given by these theorems.  As in the usual KdV case, local well-posedness cannot be achieved up to the scaling index (suggested by considering only dissipative term) using contraction mapping argument if $p>3$ in \eqref{phi}.

%\begin{remark}
As it can be seen in the proofs  below, our method in this article holds only for $-\frac p2 <s \le 0$  in Theorem \ref{teorp-1} and for
$1-\frac p2 <s \le 0$ in Theorem \ref{teorp}  considering $p>3$. However, for $s > 0$ (Theorem \ref{teorp-1} and Theorem \ref{teorp}) we already have proved local well-posedness in our earlier work \cite{XC-MP1}.
% $($see also \cite{XC-MP2}, \cite{XC-MP}$)$.  
%\end{remark}
In fact, in \cite{XC-MP1}  we proved that the IVPs \eqref{eq:hs} and \eqref{eq:hs-1} for given data in $H^s(\R)$ are locally well-posed  whenever $s>-1$ and $s>0$ respectively (see also  \cite{XC-MP} and \cite{XC-MP2}).  To obtain these results we followed the techniques developed by Bourgain \cite{bou:bou}, Kenig, Ponce and Vega \cite{kpv2:kpv2} (see also \cite{tao}) and Dix \cite{Dix}. The
main ingredients in the proof  are estimates
in the integral equation associated to an extended IVP that is defined for all 
$t\in \mathbb{R}$.
The main idea in \cite{XC-MP} and \cite{XC-MP2} is to use the usual Bourgain space associated to  the KdV equation 
instead of that associated to the linear part of the IVPs \eqref{eq:hs} and 
\eqref{eq:hs-1}. For the well-posedness issues in the periodic setting we refer to \cite{xavirica}.

 In what follows, we present some particular examples 
that belong to the class considered in \eqref{eq:hs} and \eqref{eq:hs-1} and 
discuss the known well-posedness results about them.

The first examples belonging to the classes  \eqref{eq:hs} and \eqref{eq:hs-1} are the
Korteweg-de Vries-Burgers (KdV-B) equation
\begin{equation}\label{eqhsB}
   \begin{gathered}
     v_t+v_{xxx}-\eta v_{xx}+(v^{2})_x=0, \quad  x \in \mathbb{R}, \; t\geq  0,\\
     v(x,0)=v_0(x),
   \end{gathered}
\end{equation}
and
\begin{equation}\label{eqhs-1B}
   \begin{gathered}
     u_t+u_{xxx}-\eta u_{xx}+(u_x)^{2}=0, \quad x \in \mathbb{R}, \; t\geq     0,\\
     u(x,0)=u_0(x),
   \end{gathered}
\end{equation}
where $u=u(x,t)$, $v=v(x,t)$ are real-valued functions and $\eta>0$ is a constant,
$\Phi(\xi)=-|\xi|^2$ and $p=2$. Equation \eqref{eqhsB} has been derived as a model for the propagation of weakly nonlinear dispersive
long waves in some physical contexts when dissipative effects occur (see \cite{OS}).
For motivation, we refer to the recent work of Molinet and Ribaud \cite{MR} where the authors proved sharp global well-posedness for $s>-1$  in the framework of the Fourier transform restriction norm spaces introduced by Bourgain \cite{bou:bou}.
%The long time asymptotic behavior of its solutions and of the solutions of the so-called
%generalized KdV-B equation has been studied in numerous papers (see [2, 7] and the

Next examples that fit in \eqref{eq:hs} and \eqref{eq:hs-1} are
\begin{equation}\label{eqhs}
   \begin{gathered}
     v_t+v_{xxx}-\eta(\mathcal{H}v_x+\mathcal{H}v_{xxx})+(v^{2})_x=0, \quad  x \in \mathbb{R}, \; t\geq  0,\\
     v(x,0)=v_0(x),
   \end{gathered}
\end{equation}
and
\begin{equation}\label{eqhs-1}
   \begin{gathered}
     u_t+u_{xxx}-\eta(\mathcal{H}u_x+\mathcal{H}u_{xxx})+(u_x)^{2}=0, \quad x \in \mathbb{R}, \; t\geq     0,\\
     u(x,0)=u_0(x),
   \end{gathered}
\end{equation}
respectively, where $\mathcal{H}$ denotes the Hilbert transform
\begin{align*}
\mathcal{H}g(x)=\operatorname{P.V.} \frac{1}{\pi}\int\frac{g(x-\xi)}{\xi}d\xi,
\end{align*}
$u=u(x,t)$, $v=v(x,t)$ are real-valued functions and $\eta>0$ is a constant, $\Phi(\xi)=-|\xi|^3+|\xi|$ and $p=3$.

The equation in \eqref{eqhs}  was derived by Ostrovsky et al
\cite{O:O} to describe the radiational  instability of long
waves in a stratified shear flow.  Recently,  Carvajal and Scialom \cite{Cv-Sc} 
considered the IVP \eqref{eqhs} and proved the
local well-posedness results for given data  in $H^s$, $s \geq 0$. They also obtained an \emph{a priori} estimate for given data in $L^2(\mathbb{R})$
there by proving  global well-posedness result. The earlier well-posedness results 
for \eqref{eqhs} can be found in \cite{pa:ba1}, where for given 
data in $H^s(\mathbb{R})$, local well-posedness when $s>1/2$ and global well-posedness when
$s\geq 1$ have been proved. In \cite{pa:ba1}, IVP \eqref{eqhs-1} 
is also considered to prove global well-posedness for given data in $H^s(\mathbb{R})$, 
$s\geq1$. 

Another two  models that fit in the classes \eqref{eq:hs-1} and \eqref{eq:hs} respectively 
are the Korteweg-de Vries-Kuramoto Sivashinsky (KdV-KS) equation
\begin{equation}\label{1eqhs-1}
   \begin{gathered}
     u_t+u_{xxx}+\eta(u_{xx}+u_{xxxx})+(u_x)^2=0, \quad x \in \mathbb{R}, \; t\geq 0,\\
     u(x,0)=u_0(x),
   \end{gathered}
\end{equation}
 and its derivative equation
\begin{equation}\label{1eqhs}
   \begin{gathered}
     v_t+v_{xxx}+\eta(v_{xx}+v_{xxxx})+vv_x=0, \quad x \in \mathbb{R}, \;t\geq 0,\\
     v(x,0)=v_0(x),
   \end{gathered}
\end{equation}
where $u=u(x,t)$, $v=v(x,t)$ are real-valued functions and $\eta>0$ is a constant, $\Phi(\xi)=-|\xi|^4+|\xi|^2$ and $p=4$.

The KdV-KS equation arises as a model for long waves in a viscous fluid flowing down 
an inclined plane and also describes drift waves in a plasma (see \cite{CKTR, TK}). 
The KdV-KS equation is very interesting in the sense that it combines the dispersive 
characteristics of the Korteweg-de Vries equation and dissipative characteristics of 
the Kuramoto-Sivashinsky equation. Also, it is worth noticing that 
 \eqref{1eqhs} is a particular case of the Benney-Lin equation \cite{B,TK}; i.e.,
\begin{equation}\label{2eqhs}
   \begin{gathered}
    v_t+v_{xxx}+\eta(v_{xx}+v_{xxxx})+\beta v_{xxxxx}+vv_x=0, \quad
 x \in \mathbb{R},\;t\geq 0,\\
  v(x,0)=v_0(x),
 \end{gathered}
\end{equation}
when $\beta=0$.

The IVPs  \eqref{1eqhs-1} and \eqref{1eqhs} were studied by Biagioni, Bona, 
Iorio and Scialom \cite{BBIS}. The authors in \cite{BBIS} proved that the 
IVPs \eqref{1eqhs-1} and \eqref{1eqhs} are  locally well-posed for given data 
in $H^s$, $s\geq 1$ with $\eta >0$. They also constructed appropriate \emph{a priori}
estimates and used them to prove global well-posedness too.
The limiting behavior of solutions as the dissipation tends to zero
(i.e., $\eta\to 0$) has also been studied in \cite{BBIS}. The IVP \eqref{2eqhs} associated 
to the Benney-Lin equation is also widely studied in the literature \cite{B, BL, TK}.
Regarding well-posedness issues for the IVP \eqref{2eqhs} the work of Biagioni 
and Linares \cite{BL} is worth mentioning, where they proved global well-posedness  
for given data in $L^2(\mathbb{R})$. For the sharp well-posedness result for the KdV-KS equation we refer to the recent work of Pilod in \cite{Pilod} where the author proved local well-posedness in $H^s(\R)$ for $s>-1$ and ill-posedness for $s<-1$. For recent work on generalized Benjamin-Ono-Burgers equation we refer to \cite{otani} where the author uses Bourgain's space to obtain local well-posedness for data with low Sobolev regularity.
 
Now we consider the IVP associated to the linear parts of  \eqref{eq:hs} 
and \eqref{eq:hs-1},
\begin{equation}\label{eq0.5}
   \begin{gathered}
     w_t+w_{xxx}+\eta Lw=0, \quad x, \; t\geq 0,\\
     w(0)=w_0.
   \end{gathered}
\end{equation}
The solution to \eqref{eq0.5} is given by $w(x,t)=V(t)w_0(x)$
where the  semigroup $V(t)$ is defined as
\begin{equation}\label{gV}
\widehat{V(t)w_0}(\xi)=e^{it\xi^3+\eta t\Phi(\xi)}\widehat{w_0}(\xi).
\end{equation}
In what follows, without loss of generality, we suppose $\eta =1$.

This paper is organized as follows: In Section \ref{sec-2}, we prove 
some preliminary estimates.  Sections \ref{sec-3}  and \ref{sec-5} are dedicated to prove the local well-posednes and ill-posedness results respectively.

%%%%%%%%%%%%%%%%%%%%%%%%%%%%%%%%%%%%%%%%%%%%%%%%%%%%%%%%%%%%%%%%%%%%%%%%%%%%%%%%%%%%
\section{Preliminary estimates}\label{sec-2}
%%%%%%%%%%%%%%%%%%%%%%%%%%%%%%%%%%%%%%%%%%%%%%%%%%%%%%%%%%%%%%%%%%%%%%%%%%%%%%%%%%%%%

This section is devoted to obtain linear and nonlinear estimates that are essential 
in the proof of the main results. We start with following estimate that the Fourier symbol defined in \eqref{phi} satisfies.

\begin{lemma}\label{xav4}
There exists $M>0$ large such that for all $|\xi| \geq M$, one has that
\begin{equation}\label{xavi9}
\Phi(\xi)=-|\xi|^{p}+\Phi_1(\xi) <-1,
\end{equation}
\begin{equation}\label{eq0.6}
\frac{\Phi_1(\xi)}{|\xi|^p}\leq \frac12,
\end{equation}
and 
\begin{equation}\label{eq0.6x34}
|\Phi(\xi)| \ge  \frac{|\xi|^p}2.
\end{equation}
\end{lemma}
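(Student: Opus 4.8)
The plan is to exploit the strict gap $q<p$ between the order of the leading term $-|\xi|^p$ and the order of the lower-order perturbation $\Phi_1$. The key observation is that dividing the hypothesis $|\Phi_1(\xi)|\leq C(1+|\xi|^q)$ by $|\xi|^p$ produces a quantity decaying like $|\xi|^{-(p-q)}$, so the perturbation becomes negligible compared to $|\xi|^p$ once $|\xi|$ is large. I would organize the whole argument around the middle estimate \eqref{eq0.6}, from which the other two inequalities follow by elementary algebra.

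First I would reduce the hypothesis to a clean power bound. For $|\xi|\geq 1$ we have $|\xi|^q\geq 1$, hence $1+|\xi|^q\leq 2|\xi|^q$; this also covers the edge case $q=0$, in which $\Phi_1$ is simply bounded. Therefore $|\Phi_1(\xi)|\leq 2C|\xi|^q$ for $|\xi|\geq 1$, and consequently
\begin{equation*}
\frac{|\Phi_1(\xi)|}{|\xi|^p}\leq \frac{2C}{|\xi|^{p-q}}.
\end{equation*}
Since $p-q>0$, the right-hand side tends to $0$ as $|\xi|\to\infty$, so choosing $M\geq \max\{1,(4C)^{1/(p-q)}\}$ forces $\frac{2C}{|\xi|^{p-q}}\leq \frac12$ for all $|\xi|\geq M$. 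This yields \eqref{eq0.6}, in fact with $|\Phi_1(\xi)|$ in place of $\Phi_1(\xi)$.

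From \eqref{eq0.6} the remaining two inequalities are immediate. Since $\Phi_1(\xi)\leq |\Phi_1(\xi)|\leq \tfrac12|\xi|^p$, we obtain
\begin{equation*}
\Phi(\xi)=-|\xi|^p+\Phi_1(\xi)\leq -|\xi|^p+\tfrac12|\xi|^p=-\tfrac12|\xi|^p<0,
\end{equation*}
so that $|\Phi(\xi)|=-\Phi(\xi)\geq \tfrac12|\xi|^p$, which is \eqref{eq0.6x34}. Finally, enlarging $M$ if necessary so that $M>2^{1/p}$ (equivalently $\tfrac12 M^p>1$), we get $\Phi(\xi)\leq -\tfrac12|\xi|^p\leq -\tfrac12 M^p<-1$ for $|\xi|\geq M$, which is \eqref{xavi9}.

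I do not expect any genuine obstacle here: the entire content is the decay of $|\xi|^{-(p-q)}$ guaranteed by $q<p$, and the only care needed is in bookkeeping the single constant $M$ so that it simultaneously handles the power threshold ($M\geq(4C)^{1/(p-q)}$) and the size threshold ($M>2^{1/p}$), together with the harmless normalization $M\geq 1$ used to pass from $1+|\xi|^q$ to $2|\xi|^q$. Taking $M$ to be the maximum of these three quantities makes all three inequalities hold simultaneously for $|\xi|\geq M$.
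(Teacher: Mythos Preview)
Your proof is correct and follows essentially the same approach as the paper: both arguments rest on the observation that $\Phi_1(\xi)/|\xi|^p\to 0$ as $|\xi|\to\infty$ (the paper phrases this as a limit, you make it quantitative via $2C/|\xi|^{p-q}$), and then deduce the three inequalities by elementary algebra. The only difference is cosmetic ordering---you establish \eqref{eq0.6} first and derive \eqref{xavi9} and \eqref{eq0.6x34} from it, whereas the paper proves \eqref{xavi9} and \eqref{eq0.6} separately via limits and then combines them for \eqref{eq0.6x34}.
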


\begin{proof}
The inequalities \eqref{xavi9} and \eqref{eq0.6} are  direct consequences of
\begin{equation*}
\lim_{\xi \to \infty} \frac{\Phi_1(\xi) +1}{|\xi|^p}=0 \quad {\textrm{and}}\quad \lim_{\xi \to \infty} \frac{\Phi_1(\xi)}{ |\xi|^p}=0,
\end{equation*}
respectively.

The estimate \eqref{eq0.6x34} follows from \eqref{xavi9} and \eqref{eq0.6}. In fact, for $|\xi|>M$
\begin{equation}
|\Phi(\xi)|= |\xi|^p-\Phi_1(\xi) \ge \dfrac{|\xi|^p}{2},
\end{equation}
and this concludes the proof of the  \eqref{eq0.6x34}. 
\end{proof}

\begin{lemma}\label{lem2.2}
The Fourier symbol $\Phi(\xi)$ given by \eqref{phi} is bounded from above and the following estimate holds true
\begin{equation}\label{eq0.11}
\|e^{t\Phi(\xi)}\|_{L^{\infty}} \leq e^{TC_M}.
\end{equation}
\end{lemma}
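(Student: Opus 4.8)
The plan is to bound $\Phi$ from above by a frequency-independent constant and then read off the exponential estimate from monotonicity of the exponential. First I would split $\R$ into the two regions $|\xi|\le M$ and $|\xi|> M$, where $M$ is the constant furnished by Lemma \ref{xav4}. On the low-frequency region the defining decomposition $\Phi(\xi)=-|\xi|^p+\Phi_1(\xi)$ together with the hypothesis $|\Phi_1(\xi)|\le C(1+|\xi|^q)$ gives $\Phi(\xi)\le \Phi_1(\xi)\le C(1+|\xi|^q)\le C(1+M^q)$, since $-|\xi|^p\le 0$. On the high-frequency region the estimate \eqref{xavi9} of Lemma \ref{xav4} furnishes the even stronger bound $\Phi(\xi)<-1<0\le C(1+M^q)$. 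Setting $C_M:=C(1+M^q)$, we therefore obtain $\Phi(\xi)\le C_M$ for every $\xi\in\R$, which in particular shows that $\Phi$ is bounded from above.

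With this uniform bound in hand, the exponential estimate is immediate. For fixed $t\in[0,T]$, since $t\ge 0$ and $\Phi(\xi)-C_M\le 0$ we have $t\,\Phi(\xi)\le t\,C_M$, and monotonicity of the exponential yields $e^{t\Phi(\xi)}\le e^{t C_M}$ pointwise in $\xi$. Because $e^{t\Phi(\xi)}>0$, taking the supremum over $\xi$ gives $\|e^{t\Phi(\xi)}\|_{L^\infty}=\sup_{\xi}e^{t\Phi(\xi)}\le e^{t C_M}$ in the $\xi$ variable. Finally, the positivity $C_M>0$ and $0\le t\le T$ give $e^{t C_M}\le e^{T C_M}$, which establishes \eqref{eq0.11} uniformly in $t\in[0,T]$.

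The argument is elementary and presents no genuine obstacle; the only points requiring care are the bookkeeping in the choice of $C_M$ — one must record its positivity so that the passage from $t\le T$ to $e^{t C_M}\le e^{T C_M}$ is valid — and the correct use of the sign conditions $t\ge 0$ and $\Phi(\xi)\le C_M$, which together make the exponential monotone in the desired direction. No continuity or smoothness of $\Phi_1$ beyond the stated growth bound is needed, since the estimate is purely pointwise in $\xi$.
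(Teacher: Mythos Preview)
Your proof is correct and follows essentially the same approach as the paper: split into $|\xi|\le M$ and $|\xi|>M$ using the $M$ from Lemma~\ref{xav4}, bound $\Phi$ by a constant $C_M$ on the low-frequency piece (the paper just says ``it is easy to get $\Phi(\xi)<C_M$'' while you make this explicit via $\Phi_1(\xi)\le C(1+M^q)$), use \eqref{xavi9} on the high-frequency piece, and then exponentiate. The only extra care you take is recording that $C_M>0$ so that $e^{tC_M}\le e^{TC_M}$, which the paper leaves implicit.
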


\begin{proof}
From Lemma \ref{xav4}, there is $M>1$ large enough such that  for $|\xi|\geq M$ one has $\Phi(\xi) <-1$. Consequently, $e^{t\Phi(\xi)} \leq e^{-t}\leq 1$.  Now for $|\xi| <M$, it is easy to get $\Phi(\xi) <C_M$, so that $e^{t\Phi(\xi)} \leq e^{TC_M} $. Therefore, in any case 
\begin{equation*}
\|e^{t\Phi(\xi)}\|_{L^{\infty}} \leq e^{TC_M}.
\end{equation*}
as required.
\end{proof}

The following result is an elementary fact from calculus.
\begin{lemma}\label{ele-1}
Let $f(t)= t^a e^{tb}$ with $a>0$ and $b<0$, then for all $t\geq 0$ one has 
\begin{equation}\label{eq0.01}
f(t)\leq \left(\frac{a}{|b|}\right)^ae^{-a}.
\end{equation}
\end{lemma}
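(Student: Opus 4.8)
The plan is to treat this as a standard one-variable optimization problem: locate the unique critical point of $f$ on $(0,\infty)$, confirm by a monotonicity argument that it is the global maximum over $[0,\infty)$, and then evaluate $f$ there to read off the bound.

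First I would differentiate. Since $b<0$ we may write $|b|=-b$, so $f(t)=t^a e^{-|b|t}$, and a direct computation gives
\[
f'(t)=t^{a-1}e^{tb}\bigl(a+tb\bigr).
\]
For $t>0$ the prefactor $t^{a-1}e^{tb}$ is strictly positive, so $f'(t)=0$ occurs precisely when $a+tb=0$, that is at $t^{*}=-a/b=a/|b|$. Moreover the sign of $f'$ is governed entirely by the affine factor $a+tb$, which is positive for $t<t^{*}$ and negative for $t>t^{*}$. Hence $f$ is increasing on $(0,t^{*})$ and decreasing on $(t^{*},\infty)$, which identifies $t^{*}$ as the global maximizer on $(0,\infty)$.

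Next I would dispose of the endpoints to be sure the interior critical value dominates on all of $[0,\infty)$. Because $a>0$ we have $f(0)=0$, and $f(t)\to 0$ as $t\to\infty$ since the exponential decay outpaces the polynomial growth; both are strictly below the positive value $f(t^{*})$. Thus the supremum of $f$ over $[0,\infty)$ is attained at $t^{*}$ and the desired inequality is simply $f(t)\le f(t^{*})$.

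Finally, substituting $t^{*}=a/|b|$ and using $b/|b|=-1$ yields
\[
f(t^{*})=\Bigl(\frac{a}{|b|}\Bigr)^{a}e^{\,b\,a/|b|}=\Bigl(\frac{a}{|b|}\Bigr)^{a}e^{-a},
\]
which is exactly the claimed bound. There is no genuine obstacle here; the only step deserving a line of justification is verifying that the interior critical point, rather than the behavior at the endpoints, controls the maximum, and the sign analysis of $f'$ settles this at once.
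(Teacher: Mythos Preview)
Your proof is correct; the paper itself omits the proof entirely, merely prefacing the lemma with ``The following result is an elementary fact from calculus.'' Your optimization argument is exactly the standard one-variable maximization that this remark alludes to, so there is nothing to add.
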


\begin{lemma}\label{lem-smooth} 
Let $V(t)$ be as defined in \eqref{gV} and $v_0\in H^s(\R)$, then $$ V(\cdot)v_0\in C([0, \infty); H^s(\R))\cap C((0, \infty); H^{\infty}(\R)).$$
\end{lemma}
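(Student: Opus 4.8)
The plan is to reduce every assertion to a pointwise statement about the Fourier multiplier $m_t(\xi):=e^{it\xi^3+t\Phi(\xi)}$ and then invoke the dominated convergence theorem, using that $|m_t(\xi)|=e^{t\Phi(\xi)}$ because $\xi^3$ and $\Phi(\xi)$ are real. Two facts carry the whole argument: $\Phi$ is bounded from above (Lemma \ref{lem2.2}), which controls the behaviour near $t=0$, and $\Phi(\xi)\le -|\xi|^p/2$ for $|\xi|\ge M$ (Lemma \ref{xav4}), which produces the parabolic smoothing for $t>0$. I would organise the proof into three steps: continuity into $H^s$ on $[0,\infty)$; the smoothing estimate $V(t)v_0\in H^r$ for every $r$ and every $t>0$; and continuity into each $H^r$ on $(0,\infty)$.

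For the first step, fix $t_0\ge 0$ and $T>t_0$ and write
\[
\|V(t)v_0-V(t_0)v_0\|_{H^s}^2=\int_{\R}\langle\xi\rangle^{2s}\,|m_t(\xi)-m_{t_0}(\xi)|^2\,|\widehat{v_0}(\xi)|^2\,d\xi .
\]
The integrand tends to $0$ pointwise as $t\to t_0$ by continuity of the exponential, and for $t\in[0,T]$ it is dominated by $4e^{2TC_M}\langle\xi\rangle^{2s}|\widehat{v_0}(\xi)|^2\in L^1(\R)$, thanks to Lemma \ref{lem2.2} and $v_0\in H^s$. Dominated convergence gives $V(\cdot)v_0\in C([0,\infty);H^s(\R))$.

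For the smoothing step, fix $t>0$ and $r\in\R$; it suffices to treat $r>s$. For $|\xi|\ge M$ Lemma \ref{xav4} gives $e^{2t\Phi(\xi)}\le e^{-t|\xi|^p}$, so with $a:=2(r-s)/p>0$ and the substitution $y=|\xi|^p$, Lemma \ref{ele-1} yields
\[
\langle\xi\rangle^{2(r-s)}e^{2t\Phi(\xi)}\le C\,|\xi|^{2(r-s)}e^{-t|\xi|^p}=C\,y^{a}e^{-ty}\le C\Big(\frac{a}{t}\Big)^{a}e^{-a}.
\]
Since the same quantity is trivially bounded on $|\xi|<M$, there is a constant $C(t,r,s,p)$ with $\langle\xi\rangle^{2(r-s)}e^{2t\Phi(\xi)}\le C(t,r,s,p)$ for all $\xi$, whence $\|V(t)v_0\|_{H^r}^2\le C(t,r,s,p)\,\|v_0\|_{H^s}^2<\infty$, and as $r$ is arbitrary, $V(t)v_0\in H^\infty(\R)$ for each $t>0$. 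The continuity into $H^\infty$ then follows the template of the first step: fixing $r$, a point $t_0>0$, and $0<a<t_0<b$, for $t\in[a,b]$ one has $e^{t\Phi(\xi)}\le e^{-a|\xi|^p/2}$ on $|\xi|\ge M$, so the integrand of $\|V(t)v_0-V(t_0)v_0\|_{H^r}^2$ is dominated uniformly in $t\in[a,b]$ by $4\langle\xi\rangle^{2(r-s)}e^{-a|\xi|^p}\langle\xi\rangle^{2s}|\widehat{v_0}(\xi)|^2$, where the factor $\langle\xi\rangle^{2(r-s)}e^{-a|\xi|^p}$ is bounded exactly as above; dominated convergence then yields $V(\cdot)v_0\in C((0,\infty);H^\infty(\R))$.

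The only genuine obstacle is constructing a single integrable dominating function that is uniform in $t$ on the relevant compact time intervals. This is precisely where one must separate the two regimes: boundedness of $\Phi$ from above (Lemma \ref{lem2.2}) near $t=0$, versus the decay $\Phi(\xi)\le -|\xi|^p/2$ feeding Lemma \ref{ele-1} for $t$ bounded away from $0$, where the oscillatory factor $e^{it\xi^3}$ has modulus one and contributes nothing but also does no harm.
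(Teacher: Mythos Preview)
Your proposal is correct and follows essentially the same route as the paper: both arguments bound $\langle\xi\rangle^{s'-s}e^{t\Phi(\xi)}$ in $L^\infty_\xi$ by splitting into $|\xi|\le M$ and $|\xi|>M$ (invoking Lemmas \ref{xav4} and \ref{lem2.2}), and then appeal to dominated convergence for continuity. The paper's own proof is in fact much terser—it only writes out the smoothing bound \eqref{sm0.1}--\eqref{sm0.2} and dispatches continuity in a single sentence—so your explicit treatment of the three steps (continuity at $t_0\ge 0$ in $H^s$, smoothing, and continuity in $H^r$ on $(0,\infty)$ with a uniform majorant on $[a,b]$) is more detailed; just note that your stated majorant $4\langle\xi\rangle^{2(r-s)}e^{-a|\xi|^p}\langle\xi\rangle^{2s}|\widehat{v_0}|^2$ only dominates on $|\xi|\ge M$, and on $|\xi|<M$ one should instead use the trivial bound $4e^{2bC_M}\langle M\rangle^{2(r-s)}\langle\xi\rangle^{2s}|\widehat{v_0}|^2$, a cosmetic fix.
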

\begin{proof}
It is sufficient to prove that $V(t)v_0\in H^{s'}(\R)$ for $s'>s$. Now,
\begin{equation}\label{sm0.1}
\begin{split}
\|V(t)v_0\|_{H^{s'}} &=\|\langle\xi\rangle^{s'}e^{it\xi^3+t\Phi(\xi)}\widehat{v_0}(\xi)\|_{L^2}\\
&=\|\langle\xi\rangle^{s}\widehat{v_0}(\xi)\langle\xi\rangle^{s'-s}e^{t\Phi(\xi)}\|_{L^2}\\
&\leq \|\langle\xi\rangle^{s'-s}e^{t\Phi(\xi)}\|_{L^{\infty}}\|v_0\|_{H^s}.
\end{split}
\end{equation}
Let $M\gg1$ be as in Lemma \ref{xav4}, then we have
\begin{equation}\label{sm0.2}
\begin{split}
\|\langle\xi\rangle^{s'-s}e^{t\Phi(\xi)}\|_{L^{\infty}}
&\leq \|\langle\xi\rangle^{s'-s}e^{t\Phi(\xi)}\|_{L^{\infty}(|\xi|\leq M)}
+\|\langle\xi\rangle^{s'-s}e^{t\Phi(\xi)}\|_{L^{\infty}(|\xi|> M)}\\
&\leq  C_M + \|e^{-\frac{|\xi|^p}2t}\langle\xi\rangle^{s'-s}\|_{L^{\infty}} <\infty, \qquad t\in \R^+.
\end{split}
\end{equation}
Continuity follows using dominated convergence theorem.
\end{proof}

\begin{lemma}\label{xav5}
Let  $0<T \leq 1$ and $t\in [0, T]$. Then for all $s\in \R$, we have
\begin{equation}\label{eq0.9}
\|V(t)u_0\|_{X_T^s}\lesssim  e^{C_MT} \|u_0\|_{H^s},
\end{equation}
where the constant $C_M$ depends on $M$ with $M$ as in Lemma \ref{xav4}.
\end{lemma}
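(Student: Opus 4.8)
The plan is to estimate the two contributions to the $X_T^s$ norm separately, exploiting that the dispersive factor $e^{it\xi^3}$ is a unimodular Fourier multiplier while the dissipative factor $e^{t\Phi(\xi)}$ supplies both uniform boundedness (Lemma \ref{lem2.2}) and, at high frequencies, strong smoothing (Lemma \ref{xav4}). For the first term, I would use $|e^{it\xi^3}|=1$ and Plancherel to write
\begin{equation*}
\|V(t)u_0\|_{H^s}=\|\langle\xi\rangle^s e^{t\Phi(\xi)}\widehat{u_0}(\xi)\|_{L^2}\leq \|e^{t\Phi(\xi)}\|_{L^\infty}\|u_0\|_{H^s}\leq e^{TC_M}\|u_0\|_{H^s},
\end{equation*}
which is immediate from Lemma \ref{lem2.2} and holds uniformly in $t\in[0,T]$.

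The remaining work is the weighted term $t^{|s|/p}\|V(t)u_0\|_{L^2}$. Writing $\widehat{u_0}=\langle\xi\rangle^{-s}\big(\langle\xi\rangle^s\widehat{u_0}\big)$ and pulling out an $L^\infty$ multiplier, I would reduce the claim to the pointwise bound
\begin{equation*}
m_s(t,\xi):=t^{|s|/p}\langle\xi\rangle^{-s}e^{t\Phi(\xi)}\lesssim e^{C_MT},\qquad t\in[0,T],\ \xi\in\R,
\end{equation*}
since then $t^{|s|/p}\|V(t)u_0\|_{L^2}\leq \|m_s(t,\cdot)\|_{L^\infty}\|u_0\|_{H^s}$. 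When $s\geq 0$ this is trivial: $\langle\xi\rangle^{-s}\leq 1$, $t^{s/p}\leq T^{s/p}\leq 1$ because $T\leq 1$, and $e^{t\Phi(\xi)}\leq e^{TC_M}$. The substantive case is $s<0$, where $\langle\xi\rangle^{-s}=\langle\xi\rangle^{|s|}$ grows and must be absorbed by the dissipation, so I would split into $|\xi|\leq M$ and $|\xi|>M$ with $M$ as in Lemma \ref{xav4}. On $|\xi|\leq M$ the factor $\langle\xi\rangle^{|s|}$ is bounded by a constant $C_M$, $t^{|s|/p}\leq 1$, and $e^{t\Phi(\xi)}\leq e^{TC_M}$, so $m_s\lesssim C_M e^{TC_M}$. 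On $|\xi|>M$ I would invoke $\Phi(\xi)\leq -|\xi|^p/2$ (inequality \eqref{eq0.6x34}) together with $\langle\xi\rangle\leq 2|\xi|$ to obtain $m_s(t,\xi)\lesssim t^{|s|/p}|\xi|^{|s|}e^{-t|\xi|^p/2}$.

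The heart of the matter is this last high-frequency quantity, in which the time weight $t^{|s|/p}$, the polynomial growth $|\xi|^{|s|}$, and the smoothing $e^{-t|\xi|^p/2}$ are in exact balance. I would apply Lemma \ref{ele-1} with $a=|s|/p>0$ and $b=-|\xi|^p/2<0$, treating $\xi$ as a fixed parameter, to get
\begin{equation*}
t^{|s|/p}e^{-t|\xi|^p/2}\leq \Big(\frac{2|s|}{p|\xi|^p}\Big)^{|s|/p}e^{-|s|/p}.
\end{equation*}
Multiplying by $|\xi|^{|s|}$ cancels the $\xi$-dependence exactly, leaving the frequency- and time-independent constant $(2|s|/p)^{|s|/p}e^{-|s|/p}$. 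This is precisely why the exponent $|s|/p$ appears in the weight \eqref{sp.1}: it matches the parabolic scaling of the leading dissipative term $|\xi|^p$. Collecting the two frequency regimes yields $\|m_s(t,\cdot)\|_{L^\infty}\lesssim e^{C_MT}$, and combining with the $H^s$ estimate completes the bound. The only delicate point is tracking the cancellation in the high-frequency regime; everything else is a routine application of the preliminary lemmas.
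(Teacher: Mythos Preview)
Your proof is correct and follows essentially the same approach as the paper: both treat the $H^s$ part via Lemma~\ref{lem2.2}, reduce the weighted $L^2$ part to an $L^\infty$ multiplier bound on $t^{|s|/p}\langle\xi\rangle^{|s|}e^{t\Phi(\xi)}$, split into low and high frequencies at $|\xi|=M$, and on the high-frequency side use Lemma~\ref{ele-1} together with $|\Phi(\xi)|\geq|\xi|^p/2$ to obtain the exact cancellation of $|\xi|^{|s|}$. The only cosmetic difference is the order of operations at high frequency---you first bound $e^{t\Phi(\xi)}\leq e^{-t|\xi|^p/2}$ and then apply Lemma~\ref{ele-1}, whereas the paper applies Lemma~\ref{ele-1} with $b=\Phi(\xi)$ and then bounds $|\Phi(\xi)|^{-|s|/p}$---but the outcome is identical.
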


\begin{proof}
We start by estimating the first component of the $X_T^s$-norm. We have that
\begin{equation}\label{eq0.10}
\|V(t)u_0\|_{H^s} = \|\langle\xi\rangle^se^{t\Phi(\xi)}\widehat{u_0}(\xi)\|_{L^2} \leq \|e^{t\Phi(\xi)}\|_{L^{\infty}}\|u_0\|_{H^s}.
\end{equation}

 Using \eqref{eq0.11} in \eqref{eq0.10}, we get 
\begin{equation}\label{eq0.12}
\|V(t)u_0\|_{H^s} \leq e^{TC_M}\|u_0\|_{H^s}.
\end{equation}

 Now, we move to estimate the second  component of the $X_T^s$-norm. The case $s\geq 0$ is quite easy, so we consider only the case when $s<0$. Using Plancherel, we have
\begin{equation}\label{eq0.13}
\begin{split}
t^{\frac{|s|}p}\|V(t)u_0\|_{L^2} &= t^{\frac{|s|}p}\|e^{t\Phi(\xi)}\widehat{u_0}\|_{L^2} \\
&= t^{\frac{|s|}p}\|\langle\xi\rangle^{-s}e^{t\Phi(\xi)}\langle\xi\rangle^{s}\widehat{u_0}\|_{L^2}\\ 
&\leq t^{\frac{|s|}p}\|\langle\xi\rangle^{|s|}e^{t\Phi(\xi)}\|_{L^{\infty}}\|u_0\|_{H^s}.
\end{split}
\end{equation}

Since $\langle\xi\rangle^{|s|} \lesssim 1+|\xi|^{|s|}$, from \eqref{eq0.13}, one obtains
\begin{equation}\label{eq0.14}
t^{\frac{|s|}p}\|V(t)u_0\|_{L^2} 
\leq t^{\frac{|s|}p}\Big[\|e^{t\Phi(\xi)}\|_{L^{\infty}}+\||\xi|^{|s|}e^{t\Phi(\xi)}\|_{L^{\infty}}\Big]\|u_0\|_{H^s}.
\end{equation}

From \eqref{eq0.11}, we have $\|e^{t\Phi(\xi)}\|_{L^{\infty}}\leq e^{TC_M}$. To estimate $\||\xi|^{|s|}e^{t\Phi(\xi)}\|_{L^{\infty}}$ we proceed as follows.
\begin{equation}\label{eq0.15}
\||\xi|^{|s|}e^{t\Phi(\xi)}\|_{L^{\infty}}\leq \||\xi|^{|s|}e^{t\Phi(\xi)}\chi_{\{|\xi|\leq M\}}\|_{L^{\infty}}+\||\xi|^{|s|}e^{t\Phi(\xi)}\chi_{\{|\xi|
> M\}}\|_{L^{\infty}}.
\end{equation}

For the low-frequency part, it is easy to get 
\begin{equation}\label{eq0.16}
 \||\xi|^{|s|}e^{t\Phi(\xi)}\chi_{\{|\xi|\leq M\}}\|_{L^{\infty}}\leq M^{|s|}e^{C_MT}.
\end{equation}

Now, we move to estimate the high-frequency part  $\||\xi|^{|s|}e^{t\Phi(\xi)}\chi_{\{|\xi|> M\}}\|_{L^{\infty}}$ in \eqref{eq0.15}. For this, we make use of the time weight in the definition of $X_T^s$-norm and define for $|\xi|> M$, $g(t,\xi):= t^{\frac{|s|}p}|\xi|^{|s|} e^{t\Phi(\xi)}$. Using the estimate \eqref{eq0.01} from Lemma \ref{ele-1}, we get
\begin{equation}\label{eq0.17}
g(t,\xi)\leq \left(\frac{|s|}{p|\Phi(\xi)|}\right)^{\frac{|s|}p}e^{-\frac{|s|}p}|\xi|^{|s|}.
\end{equation}

Since $M>1$ is large, an application of the estimate \eqref{eq0.6x34} from Lemma \ref{xav4} in \eqref{eq0.17}, yields
\begin{equation}\label{eq0.18}
g(t,\xi)\leq \left(\frac{2|s|}{p|\xi|^p}\right)^{\frac{|s|}p}e^{-\frac{|s|}p}|\xi|^{|s|} \leq \Big(\frac{2|s|}p\Big)^{\frac{|s|}p}e^{-\frac{|s|}p}.
\end{equation}

In light of the estimate \eqref{eq0.18}, one obtains that
\begin{equation}\label{eq0.181}
t^{\frac{|s|}p}\||\xi|^{|s|}e^{t\Phi(\xi)}\chi_{\{|\xi|> M\}}\|_{L^{\infty}}\leq \Big(\frac{2|s|}p\Big)^{\frac{|s|}p}e^{-\frac{|s|}p}.
\end{equation}

Inserting estimates \eqref{eq0.11}, \eqref{eq0.16} and \eqref{eq0.181} in \eqref{eq0.14},  we get
\begin{equation}\label{eq0.19}
t^{\frac{|s|}p}\|V(t)u_0\|_{L^2}\leq \Big(\frac{2|s|}p\Big)^{\frac{|s|}p}e^{-\frac{|s|}p}\|u_0\|_{H^s}\lesssim \|u_0\|_{H^s}.
\end{equation}

The conclusion of the Lemma follows from \eqref{eq0.12} and \eqref{eq0.19}.
\end{proof}

\begin{lemma}\label{lem2.4}
Let $0<T \leq 1$ and $t\in [0, T]$. Then for all $s\in \R$, we have
\begin{equation}\label{eq0.90}
\|V(t)u_0\|_{Y_T^s}\lesssim  e^{TC_M} \|u_0\|_{H^s},
\end{equation}
where the constant $C_M$ depends on $M$ with $M$ as in Lemma \ref{xav4}.
\end{lemma}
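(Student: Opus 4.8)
The plan is to follow closely the structure of the proof of Lemma \ref{xav5}, the only new feature being the extra factor $|\xi|$ produced by the derivative $\partial_x$ together with the modified time weight $t^{(1+|s|)/p}$. The first component of the $Y_T^s$-norm, namely $\|V(t)u_0\|_{H^s}$, is estimated exactly as in \eqref{eq0.10}--\eqref{eq0.12}, giving $\|V(t)u_0\|_{H^s}\leq e^{TC_M}\|u_0\|_{H^s}$ directly from \eqref{eq0.11}. Thus it remains to control the second component $t^{\frac{1+|s|}p}\|\partial_x V(t)u_0\|_{L^2}$.

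Since $\widehat{\partial_x V(t)u_0}(\xi)=i\xi\, e^{it\xi^3+t\Phi(\xi)}\widehat{u_0}(\xi)$, Plancherel's theorem gives $t^{\frac{1+|s|}p}\|\partial_x V(t)u_0\|_{L^2}=t^{\frac{1+|s|}p}\||\xi|e^{t\Phi(\xi)}\widehat{u_0}\|_{L^2}$. As in Lemma \ref{xav5} the case $s\geq 0$ is easier (the weight $t^{\frac{1+|s|}p}$ is at least as large as needed), so I would treat $s<0$. Inserting $\langle\xi\rangle^{s}\langle\xi\rangle^{-s}=1$ and pulling out the $H^s$-norm reduces everything to showing that
\[
t^{\frac{1+|s|}p}\||\xi|\langle\xi\rangle^{|s|}e^{t\Phi(\xi)}\|_{L^{\infty}}
\]
is bounded uniformly for $t\in[0,T]$, after which the bound $\lesssim\|u_0\|_{H^s}$ follows.

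Following \eqref{eq0.15} I would split this $L^\infty$-norm into the regions $\{|\xi|\leq M\}$ and $\{|\xi|>M\}$, with $M$ as in Lemma \ref{xav4}. On the low-frequency region $|\xi|\langle\xi\rangle^{|s|}\leq M\langle M\rangle^{|s|}$ is bounded, $t^{\frac{1+|s|}p}\leq 1$ since $T\leq 1$, and $e^{t\Phi(\xi)}\leq e^{TC_M}$ by Lemma \ref{lem2.2}, so this contribution is at most $M\langle M\rangle^{|s|}e^{TC_M}$. On the high-frequency region I would use $|\xi|>M>1$ to absorb the lower-order factor, namely $|\xi|\langle\xi\rangle^{|s|}\lesssim|\xi|^{1+|s|}$, and set $h(t,\xi):=t^{\frac{1+|s|}p}|\xi|^{1+|s|}e^{t\Phi(\xi)}$. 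Applying Lemma \ref{ele-1} with $a=\frac{1+|s|}p>0$ and $b=\Phi(\xi)<0$ gives
\[
h(t,\xi)\leq\Big(\frac{1+|s|}{p|\Phi(\xi)|}\Big)^{\frac{1+|s|}p}e^{-\frac{1+|s|}p}|\xi|^{1+|s|},
\]
and the lower bound $|\Phi(\xi)|\geq|\xi|^p/2$ from \eqref{eq0.6x34} cancels the factor $|\xi|^{1+|s|}$ exactly, leaving $h(t,\xi)\leq\big(\tfrac{2(1+|s|)}p\big)^{\frac{1+|s|}p}e^{-\frac{1+|s|}p}$, a constant independent of $t$ and $\xi$.

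The key point, and the only place where the particular form of the time weight matters, is precisely this cancellation: the exponent $\frac{1+|s|}p$ is chosen so that the polynomial growth $|\xi|^{1+|s|}$ (coming from the derivative together with the $\langle\xi\rangle^{|s|}$-factor) is exactly balanced against the gain $\big(|\Phi(\xi)|^{-1}\big)^{(1+|s|)/p}\sim|\xi|^{-(1+|s|)}$ extracted from the dissipation $e^{t\Phi(\xi)}$ via Lemma \ref{ele-1}. Combining the two frequency regions with the first-component estimate yields \eqref{eq0.90}. I do not anticipate any genuine obstacle: the argument is a direct adaptation of Lemma \ref{xav5} with $|s|$ replaced by $1+|s|$ in the weight and an extra $|\xi|$ in the symbol, and the only thing to verify carefully is that on the high-frequency piece the lower-order $|\xi|$-term is dominated by $|\xi|^{1+|s|}$, which holds since $|\xi|>M>1$.
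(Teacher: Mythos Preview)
Your proposal is correct and follows essentially the same route as the paper: reduce to bounding $t^{\frac{1+|s|}p}\||\xi|\langle\xi\rangle^{|s|}e^{t\Phi(\xi)}\|_{L^\infty}$, split at $|\xi|=M$, handle the low-frequency piece with Lemma~\ref{lem2.2} and $T\leq 1$, and on the high-frequency piece apply Lemma~\ref{ele-1} with $a=\tfrac{1+|s|}p$ together with \eqref{eq0.6x34} to cancel $|\xi|^{1+|s|}$. The paper's presentation differs only cosmetically (it first isolates $e^{t\Phi(\xi)}\leq (ae^{-1}/|\Phi(\xi)|)^a t^{-a}$ and then multiplies through), so your write-up matches.
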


\begin{proof}
The estimate for the first component of the $Y_T^s$-norm has already been obtained in \eqref{eq0.12}. In what follows, we estimate the second  component of the $Y_T^s$-norm. We only consider the case when $s<0$. In the case when $s\geq 0$ the estimates follow easily. Using Plancherel identity, we have
\begin{equation}\label{eq0.130}
\begin{split}
t^{\frac{1+|s|}{p}}\|\partial_xV(t)u_0\|_{L^2} &= t^{\frac{1+|s|}{p}}\|\xi e^{t\Phi(\xi)}\widehat{u_0}\|_{L^2} \\
&= t^{\frac{1+|s|}{p}}\|\xi\langle\xi\rangle^{-s}e^{t\Phi(\xi)}\langle\xi\rangle^{s}\widehat{u_0}\|_{L^2}\\ 
&\leq t^{\frac{1+|s|}{p}}\|\xi\langle\xi\rangle^{|s|}e^{t\Phi(\xi)}\|_{L^{\infty}}\|u_0\|_{H^s}.
\end{split}
\end{equation}

Now,
\begin{equation}\label{eq0.140}
\begin{split}
t^{\frac{1+|s|}{p}}\|\xi\langle\xi\rangle^{|s|}e^{t\Phi(\xi)}\|_{L^{\infty}}
&\leq  t^{\frac{1+|s|}{p}}\|\xi\langle\xi\rangle^{|s|}e^{t\Phi(\xi)}\chi_{\{|\xi|\leq M\}}\|_{L^{\infty}}
+ t^{\frac{1+|s|}{p}}\|\xi\langle\xi\rangle^{|s|}e^{t\Phi(\xi)}\chi_{\{|\xi|> M\}}\|_{L^{\infty}}\\
&=:J_1+J_2.
\end{split}
\end{equation}

Since $\langle\xi\rangle^{|s|} \lesssim 1+|\xi|^{|s|}$,  and $t\in [0, T]$ with $0\leq T\leq 1$, we have
 \begin{equation}\label{eq0.110}
J_1\lesssim C_Mt^{\frac{1+|s|}{p}} \leq C_M.
\end{equation}

Now, we move to estimate the high-frequency part  $J_2$. For this, we use the estimate \eqref{eq0.01} from Lemma \ref{ele-1} with $b= \Phi(\xi) <0$ and $a =\frac{1+|s|}p$, to get
\begin{equation}\label{eq0.170}
e^{t\Phi(\xi)} \leq \left(\frac{ a e^{-1}}{|\Phi(\xi)|}\right)^{a}\frac1{t^a}.
\end{equation}

Since $M>1$ is large, $\langle\xi\rangle^{|s|} \lesssim |\xi|^{|s|}$, an application of the estimate \eqref{eq0.6x34} from Lemma \ref{xav4} in \eqref{eq0.170}, yields
\begin{equation}\label{eq0.180}
 t^{\frac{1+|s|}{p}}|\xi|^{1+|s|}\left(\frac{ a e^{-1}}{|\Phi(\xi)|}\right)^{a}\frac1{t^a} \lesssim t^{\frac{1+|s|}{p}-a}|\xi|^{1+|s|-ap} \lesssim C_M,
\end{equation}
and consequently
\begin{equation}\label{j-2}
J_2\lesssim C_M.
\end{equation}

The conclusion of the Lemma follows from \eqref{eq0.12}, \eqref{eq0.130}, \eqref{eq0.140}, \eqref{eq0.110}  and \eqref{j-2}.
\end{proof}

\begin{lemma}\label{xav6}
Let $-\frac p2<s$, $p>3$ and $\tau \in (0, 1]$. Then we have
\begin{equation}\label{eq0.9x1}
\|\xi \langle \xi \rangle^s e^{\tau \Phi(\xi)}\|_{L^2_\xi}\lesssim  \dfrac{1}{\tau^{\frac12+ \frac{s}p}},
\end{equation}
and
\begin{equation}\label{eq0.9x2}
\|\xi e^{\tau \Phi(\xi)}\|_{L^2_\xi}\lesssim  \dfrac{1}{\tau^{\frac{3^+}{2p}}}.
\end{equation}

\end{lemma}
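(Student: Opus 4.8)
The plan is to prove both estimates by splitting the frequency domain into the compact region $|\xi|\le M$ and the high-frequency region $|\xi|>M$, where $M$ is the constant furnished by Lemma \ref{xav4}. On the low-frequency piece everything is harmless: the weight $\xi\langle\xi\rangle^s$ (or $\xi$ in the case of \eqref{eq0.9x2}) is bounded there, and since $\Phi$ is bounded above we have $e^{\tau\Phi(\xi)}\le e^{\tau C_M}\le e^{C_M}$ for $\tau\in(0,1]$ by Lemma \ref{lem2.2}, so the $L^2(|\xi|\le M)$-norm is bounded by a constant. Because $s>-\frac p2$ forces $\frac12+\frac sp>0$, we have $\tau^{-(1/2+s/p)}\ge1$ for $\tau\le1$, and this constant is absorbed into the right-hand side of \eqref{eq0.9x1}; the same remark handles the low frequencies in \eqref{eq0.9x2}.

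The high-frequency piece is where the real work lies. For $|\xi|>M$ the estimate \eqref{eq0.6x34} together with $\Phi(\xi)<0$ gives $\Phi(\xi)\le-\frac12|\xi|^p$, hence $e^{2\tau\Phi(\xi)}\le e^{-\tau|\xi|^p}$, and $\langle\xi\rangle^{s}\simeq|\xi|^{s}$ since $|\xi|>M>1$. Thus the square of the high-frequency contribution to \eqref{eq0.9x1} is controlled by $\int_{|\xi|>M}|\xi|^{2+2s}e^{-\tau|\xi|^p}\,d\xi$. I would then perform the natural rescaling $u=\tau^{1/p}\xi$, which turns this into
\begin{equation*}
2\,\tau^{-\frac{3+2s}{p}}\int_{\tau^{1/p}M}^{\infty}u^{2+2s}e^{-u^p}\,du .
\end{equation*}
Comparing with the target bound $\tau^{-1-\frac{2s}p}$ (the square of the claimed right-hand side), it suffices to show that $G(\tau):=\int_{\tau^{1/p}M}^{\infty}u^{2+2s}e^{-u^p}\,du\lesssim\tau^{-\frac{p-3}{p}}$.

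To establish this last bound I would split the $u$-integral at $u=1$. The tail $\int_1^\infty u^{2+2s}e^{-u^p}\,du$ is a finite constant. For $\int_{\tau^{1/p}M}^1 u^{2+2s}\,du$ there are two regimes: if $2+2s>-1$ (equivalently $s>-\frac32$) the integrand is integrable at the origin and this piece is again a constant, so $G(\tau)=O(1)\lesssim\tau^{-\frac{p-3}{p}}$ because $p>3$; if $2+2s\le-1$ (which can occur precisely because $p>3$ allows $s<-\frac32$) the lower endpoint dominates and the piece is $\simeq\tau^{(3+2s)/p}$ (with a harmless logarithm at $s=-\frac32$). The required inequality $\tau^{(3+2s)/p}\lesssim\tau^{-(p-3)/p}$ then reduces, since $\tau\le1$, to $3+2s\ge3-p$, i.e.\ exactly $s\ge-\frac p2$. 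This is the heart of the matter, and the step I expect to be the main obstacle: the hypothesis $s>-\frac p2$ is precisely what guarantees that the divergence of $G(\tau)$ as $\tau\to0$ is no worse than the allowed power, while $p>3$ is what makes that allowed power $\tau^{-(p-3)/p}$ genuinely blow up and thereby dominate the logarithm. Finally, estimate \eqref{eq0.9x2} is the special case $s=0$ of this computation: there $2+2s=2>-1$, so $G(\tau)=O(1)$ and the rescaling yields the clean power $\tau^{-3/(2p)}$, which is in turn bounded by $\tau^{-3^+/(2p)}$ for $\tau\le1$.
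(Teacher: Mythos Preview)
Your proof is correct and complete. The high-frequency treatment, however, follows a genuinely different route from the paper's. Where you substitute $u=\tau^{1/p}\xi$ and then control the resulting integral $G(\tau)=\int_{\tau^{1/p}M}^\infty u^{2+2s}e^{-u^p}\,du$ by splitting at $u=1$ and distinguishing the cases $3+2s>0$, $=0$, $<0$, the paper instead applies the pointwise calculus inequality of Lemma~\ref{ele-1}: for $|\xi|>M$ one has $e^{2\tau\Phi(\xi)}\le C_a\,\tau^{-a}|\Phi(\xi)|^{-a}$ for any $a>0$, and choosing $a=1+\tfrac{2s}{p}$ (respectively $a=\tfrac{3^+}{p}$ for \eqref{eq0.9x2}) reduces the high-frequency piece to $\tau^{-a}\int_{|\xi|>M}|\xi|^{2+2s-pa}\,d\xi$, which converges because $p>3$ forces the exponent $2-p<-1$ (respectively $2-3^+<-1$). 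The paper's argument is shorter and avoids case analysis, but the hypothesis $s>-\tfrac p2$ enters somewhat opaquely as the positivity condition $a>0$; your rescaling makes it appear transparently as the threshold for the blow-up rate of $G(\tau)$. As a minor dividend, for \eqref{eq0.9x2} your method delivers the clean exponent $\tfrac{3}{2p}$, whereas the paper must take $a=\tfrac{3^+}{p}$ to avoid the borderline divergent integral $\int_{|\xi|>M}|\xi|^{-1}d\xi$, hence the $3^+$ in the statement.
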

\begin{proof}
In order to prove \eqref{eq0.9x1}, let $M$ be as in Lemma \ref{xav4}, and decompose the integral
\begin{equation}\label{eq0.17xavx1}
\begin{split}
\|\xi \langle \xi \rangle^s e^{\tau \Phi(\xi)}\|_{L^2_\xi}^2 & =\int_{|\xi| \leq M} \xi^2 \langle \xi \rangle^{2s} e^{2\tau \Phi(\xi)}d\xi +\int_{|\xi| \geq M} \xi^2 \langle \xi \rangle^{2s} e^{2\tau \Phi(\xi)}d\xi=:I_1+I_2.
\end{split}
\end{equation}
In the first integral, since $1+ 2\frac{s}p>0$ and $\tau \in (0,1]$ we have
\begin{equation}\label{eq0.17xavx2}
I_1 \le \int_{|\xi| \leq M} M^2 e^{2C \tau} d\xi \le 2M^3 e^{2C \tau} \le  \dfrac{2M^3 e^{2C}}{\tau^{1+ 2\frac{s}p}}.
\end{equation}

Now, we consider the second integral in \eqref{eq0.17xavx1}.  
For sufficiently large $M$, if we take $b=2 \Phi(\xi) <0$ (see Lemma \ref{xav4}) and $a=1+ 2\frac{s}p>0$, then using the estimates \eqref{eq0.01} and \eqref{eq0.6x34}, we get
\begin{equation*}
\begin{split}
I_2 &\lesssim \frac1{\tau^a} \int_{|\xi| \geq M} \xi^2 \langle \xi \rangle^{2s} \dfrac{1}{ |\Phi(\xi)|^a}\, d\xi \le \frac1{\tau^{1+ 2\frac{s}p}} \int_{|\xi| \geq M}\dfrac{1}{|\xi|^{-2-2s+p\,(1+ 2\frac{s}p)}} \, d\xi \lesssim \frac1{\tau^{1+ 2\frac{s}p}},
\end{split}
\end{equation*}
where in the last inequality  the fact that $-2-2s+p\,(1+ 2s/p)=p-2>1$ has been used, 
and this proves \eqref{eq0.9x1}.

The proof of the \eqref{eq0.9x2} is very similar. Again we consider $M$  as in Lemma \ref{xav4}, and decompose the integral
\begin{equation}\label{eq0.17xavx4}
\begin{split}
\|\xi  e^{\tau \Phi(\xi)}\|_{L^2_\xi}^2 & =\int_{|\xi| \leq M} \xi^2  e^{2\tau \Phi(\xi)}d\xi +\int_{|\xi| \geq M} \xi^2  e^{2\tau \Phi(\xi)}d\xi=:J_1+J_2.
\end{split}
\end{equation}
Since $\frac p3>0$ and $\tau \in (0,1]$, we have
\begin{equation}\label{eq0.17xavx5}
J_1 \le \int_{|\xi| \leq M} M^2 e^{2C \tau} d\xi \le 2M^3 e^{2C \tau} \le  \dfrac{2M^3 e^{2C}}{\tau^{\frac{3^+}p}}.
\end{equation}
Similarly as in the case of $I_2$, using \eqref{eq0.01} with $b=2 \Phi(\xi) <0$ and $a=\frac{3^+}p>0$, and estimate \eqref{eq0.6x34}, we obtain
\begin{equation*}
\begin{split}
J_2 &\lesssim \frac1{\tau^a} \int_{|\xi| \geq M} \xi^2  \dfrac{1}{ |\Phi(\xi)|^a}\, d\xi \le \frac1{\tau^{\frac{3^+}p}} \int_{|\xi| \geq M}\dfrac{1}{|\xi|^{-2+p\,(\frac{3^+}p)}} \, d\xi \lesssim \frac1{\tau^{\frac{3^+}p}},
\end{split}
\end{equation*}
where  in the last inequality the fact that $-2+p\,\big(\frac{3^+}p\big)>1,$
has been used, and this proves \eqref{eq0.9x2}.
\end{proof}

\begin{proposition}\label{xav7}
Let $-\frac p2<s\le 0$, $p>3$, $0<T \leq 1$ and $t\in [0, T]$. Then we have
\begin{equation}\label{eq0.9x3}
\left\|\int_0^t V(t-t')\partial_x(u v) (t') dt'\right\|_{X_T^s}\lesssim  T^{\alpha} \|u\|_{X_T^s}\|v\|_{X_T^s},
\end{equation}
where $\alpha= \frac{2s+p}{2p}>0$.

\end{proposition}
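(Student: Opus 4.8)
The plan is to estimate the two ingredients of the $X_T^s$-norm separately, in each case combining the dissipative smoothing recorded in Lemma \ref{xav6} with the elementary observation that the Fourier transform of a product of two $L^2$ functions is a bounded function. Concretely, since the dispersive phase $e^{i(t-t')\xi^3}$ is unimodular, on the Fourier side the operator $V(t-t')\partial_x$ acts as multiplication by the symbol $i\xi\,e^{(t-t')\Phi(\xi)}$ (up to that harmless phase), and this is exactly the object controlled in $L^2_\xi$ by Lemma \ref{xav6}. The bilinear input will be supplied through $\|\widehat{uv}(t')\|_{L^\infty_\xi}=\|\hat u(t')\ast\hat v(t')\|_{L^\infty}\le\|u(t')\|_{L^2}\|v(t')\|_{L^2}$, by Cauchy--Schwarz.

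First I would treat the $H^s$-component. Using Minkowski's inequality to move the norm inside the Duhamel integral, and then $\|fg\|_{L^2}\le\|f\|_{L^2}\|g\|_{L^\infty}$, I would bound
\[
\Big\|\int_0^t V(t-t')\partial_x(uv)(t')\,dt'\Big\|_{H^s}\le \int_0^t \big\|\langle\xi\rangle^s\xi\, e^{(t-t')\Phi(\xi)}\big\|_{L^2_\xi}\,\|\widehat{uv}(t')\|_{L^\infty_\xi}\,dt'.
\]
Applying \eqref{eq0.9x1} with $\tau=t-t'$, using the product estimate above, and bounding $\|u(t')\|_{L^2}\le t'^{s/p}\|u\|_{X_T^s}$ and $\|v(t')\|_{L^2}\le t'^{s/p}\|v\|_{X_T^s}$ (which come from the time weight in the $X_T^s$-norm, recalling $s\le0$ so that $-|s|/p=s/p$), reduces everything to the scalar integral $\int_0^t (t-t')^{-(1/2+s/p)}\,t'^{2s/p}\,dt'$. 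The substitution $t'=t\sigma$ turns this into $t^{1/2+s/p}$ times a finite Beta integral, and since $t\le T\le1$ and $\alpha=\tfrac12+\tfrac{s}{p}$, this gives exactly the $T^\alpha$ factor.

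The $L^2$-component is handled identically, the only change being the absence of the $\langle\xi\rangle^s$ weight, so that \eqref{eq0.9x2} replaces \eqref{eq0.9x1}. After the same product estimate the remaining integral is $\int_0^t(t-t')^{-3^+/(2p)}\,t'^{2s/p}\,dt'$, which equals, up to a finite constant, $t^{1-3^+/(2p)+2s/p}$. Multiplying by the prefactor $t^{|s|/p}=t^{-s/p}$ yields the overall power $t^{\,1-3^+/(2p)+s/p}$, and this exponent is at least $\alpha=\tfrac12+\tfrac{s}{p}$ precisely when $3^+/(2p)\le\tfrac12$, i.e. when $p>3$; since $t\le T\le1$, the power of $t$ is then dominated by $T^\alpha$.

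The main obstacle, and the place where the hypotheses genuinely enter, is the convergence of these two time integrals at their endpoints. Integrability at $t'=0$ of the factor $t'^{2s/p}$ demands $2s/p>-1$, that is $s>-\tfrac{p}{2}$, which is exactly the regularity threshold in the statement. Integrability at $t'=t$ of the dissipative factors demands $\tfrac12+\tfrac{s}{p}<1$ (automatic from $s\le0$) and $3^+/(2p)<1$, the latter being where $p>3$ is used; the same condition also guarantees that the $L^2$-component carries a power of $T$ no smaller than $\alpha$. I would be careful to fix the parameter hidden in the notation $3^+$ as $3+\varepsilon$ with $\varepsilon<p-3$, so that all of these inequalities hold simultaneously and the Beta integrals are genuinely finite.
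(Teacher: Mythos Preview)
Your proposal is correct and follows essentially the same approach as the paper: split the $X_T^s$-norm into its $H^s$ and weighted $L^2$ components, pass to the Fourier side, pull out $\|\widehat{uv}(t')\|_{L^\infty_\xi}\le\|u(t')\|_{L^2}\|v(t')\|_{L^2}\le t'^{2s/p}\|u\|_{X_T^s}\|v\|_{X_T^s}$, apply the two estimates of Lemma~\ref{xav6} to the remaining multipliers, and reduce to Beta integrals via $t'=t\sigma$. If anything, you are slightly more explicit than the paper in verifying the endpoint integrability conditions and in noting that $p>3$ is precisely what makes the $L^2$-component's exponent $1-\tfrac{3^+}{2p}+\tfrac{s}{p}$ dominate $\alpha$.
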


\begin{proof}

Using the definition of $V(t)$ and Minkowski's inequality, we have
\begin{equation}\label{proxav1}
\begin{split}
\left\|\int_0^t V(t-t')\partial_x(u v) (t')dt'\right\|_{H^s} & \le \int_0^t \| \xi \langle \xi \rangle^s e^{(t-t')\Phi(\xi)} (\widehat{u(t')}\ast \widehat{v(t')})dt'\|_{L_\xi^2}\\
& \le \int_0^t \| \xi \langle \xi \rangle^s e^{(t-t')\Phi(\xi)}\|_{L_\xi^2} \|(\widehat{u(t')}\ast \widehat{v(t')})(\xi)\|_{L_\xi^\infty}dt'.
\end{split}
\end{equation}
The Young's  inequality, Plancherel identity and definition of $X_T^s$ norm yield
\begin{equation}\label{convinf}
\|(\widehat{u(t')}\ast \widehat{v(t')})(\xi)\|_{L_\xi^\infty} \le t'^{-\frac{2|s|}p}\|u\|_{X_T^s}\|v\|_{X_T^s}.
\end{equation}
Combining inequalities \eqref{proxav1}, \eqref{convinf} and inequality \eqref{eq0.9x1} in Lemma \ref{xav6}, we get
 \begin{equation}\label{proxav2}
\left\|\int_0^t V(t-t')\partial_x(u v) (t')dt'\right\|_{H^s}
\lesssim \|u\|_{X_T^s}\|v\|_{X_T^s} \int_0^t \dfrac{1}{|t-t'|^{\frac12+\frac{s}p}\,|t'|^{\frac{2|s|}p} } dt'
\end{equation}
Making a change of variables $t'= t\tau$, we get
\begin{equation}\label{ea1.23}
\begin{split}
\left\|\int_0^t V(t-t')\partial_x(u v) (t')dt'\right\|_{H^s}
& \lesssim  t^{\frac{p+2s}{2p}}\, \|u\|_{X_T^s}\|v\|_{X_T^s}\int_0^1 \dfrac{1}{|1-\tau|^{\frac12+\frac{s}p}\,|\tau|^{\frac{2|s|}p} } d\tau\\
& \lesssim  t^{\frac{p+2s}{2p}}\, \|u\|_{X_T^s}\|v\|_{X_T^s}.
\end{split}
\end{equation}
Similarly inequality \eqref{eq0.9x2} in Lemma \ref{xav6} and \eqref{convinf} give
\begin{equation}\label{proxav3}
 t^{\frac{|s|}p}\left\|\int_0^t V(t-t')\partial_x(u v) (t')dt'\right\|_{L^2}
\lesssim \|u\|_{X_T^s}\|v\|_{X_T^s} t^{\frac{|s|}p} \int_0^t \dfrac{1}{|t-t'|^{\frac{3^+}{2p}}\,|t'|^{\frac{2|s|}p} } d\tau
\end{equation}

Again, Making a change of variables $t'= t\tau$, one has
\begin{equation}\label{eq1.24}
\begin{split}
t^{\frac{|s|}p}\left\|\int_0^t V(t-t')\partial_x(u v) (t')dt'\right\|_{L^2}&\lesssim  t^{\frac{2p+2s-3^+}{2p}}\|u\|_{X_T^s}\|v\|_{X_T^s} 
\int_0^1 \dfrac{1}{|1-\tau|^{\frac{3^+}{2p}}\,|\tau|^{\frac{2|s|}p} } d\tau\\
& \lesssim t^{\frac{2p+2s-3^+}{2p}} \|u\|_{X_T^s}\|v\|_{X_T^s}.
\end{split}
\end{equation}
\end{proof}

We also need the following estimate.
\begin{lemma}\label{xav6.6}
Let $1-\frac p2<s$, $p>3$ and $\tau \in (0, 1]$. Then we have
\begin{equation}\label{eq0.99}
\| \langle \xi \rangle^s e^{\tau \Phi(\xi)}\|_{L^2_\xi}\lesssim  \dfrac{1}{\tau^{\frac{p-2+2s}{2p}}}.
\end{equation}
\end{lemma}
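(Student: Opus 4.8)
The plan is to estimate the $L^2_\xi$ norm by splitting the integral into low- and high-frequency regions at the threshold $M$ from Lemma \ref{xav4}, exactly mirroring the proof of Lemma \ref{xav6}. Write
\begin{equation*}
\| \langle \xi \rangle^s e^{\tau \Phi(\xi)}\|_{L^2_\xi}^2 = \int_{|\xi| \leq M} \langle \xi \rangle^{2s} e^{2\tau \Phi(\xi)}\,d\xi + \int_{|\xi| \geq M} \langle \xi \rangle^{2s} e^{2\tau \Phi(\xi)}\,d\xi =: I_1 + I_2.
\end{equation*}
The target exponent on the right-hand side is $a := \frac{p-2+2s}{2p}$, and the hypothesis $s > 1 - \frac{p}{2}$ is precisely what guarantees $a > 0$ (since $p - 2 + 2s > 0$), so both the low- and high-frequency bounds can be phrased as powers of $1/\tau$ with a positive exponent.

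For the low-frequency part, since $a > 0$ and $\tau \in (0,1]$ we have $1 \le \tau^{-2a}$, and $\Phi$ is bounded above by some constant $C$ on all of $\R$ by Lemma \ref{lem2.2}; thus
\begin{equation*}
I_1 \le \int_{|\xi| \leq M} \langle \xi \rangle^{2s} e^{2C\tau}\,d\xi \le C_M e^{2C} \le \frac{C_M e^{2C}}{\tau^{2a}},
\end{equation*}
where $C_M$ absorbs the finite integral $\int_{|\xi|\le M}\langle\xi\rangle^{2s}\,d\xi$ over the compact frequency region.

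For the high-frequency part, I would apply Lemma \ref{ele-1} (the calculus estimate \eqref{eq0.01}) with $b = 2\Phi(\xi) < 0$ and the exponent $2a = \frac{p-2+2s}{p} > 0$, which gives $e^{2\tau\Phi(\xi)} \lesssim \tau^{-2a}\,|\Phi(\xi)|^{-2a}$; then estimate \eqref{eq0.6x34} from Lemma \ref{xav4} replaces $|\Phi(\xi)|^{-2a}$ by a multiple of $|\xi|^{-2ap} = |\xi|^{-(p-2+2s)}$. On $|\xi| \ge M$ we also use $\langle\xi\rangle^{2s} \lesssim |\xi|^{2s}$, so that
\begin{equation*}
I_2 \lesssim \frac{1}{\tau^{2a}}\int_{|\xi| \geq M} |\xi|^{2s}\,|\xi|^{-(p-2+2s)}\,d\xi = \frac{1}{\tau^{2a}}\int_{|\xi| \geq M} \frac{d\xi}{|\xi|^{\,p-2}}.
\end{equation*}
The remaining integral converges because $p > 3$ forces $p - 2 > 1$, and taking the square root of $I_1 + I_2 \lesssim \tau^{-2a}$ yields \eqref{eq0.99}. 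The only genuine point requiring care is verifying the sign conditions: one must check that $a > 0$ (guaranteed by $s > 1 - \frac{p}{2}$, which ensures Lemma \ref{ele-1} applies with a positive power) and that the spatial integral in $I_2$ is finite (guaranteed by $p > 3$). I expect no serious obstacle here, since the argument is a direct adaptation of the one already carried out for \eqref{eq0.9x1}, with the extra factor of $\xi$ removed and the weight $\langle\xi\rangle^s$ handled identically.
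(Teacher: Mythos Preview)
Your argument is correct and is essentially identical to the paper's own proof: the same low/high frequency split at $M$, the same use of Lemma~\ref{ele-1} with exponent $\frac{p-2+2s}{p}$ and $b=2\Phi(\xi)$, and the same convergence check $p-2>1$ for the tail integral. The only difference is notational (you call $a$ what the paper calls $a/2$), so there is nothing to add.
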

\begin{proof}

For $M$ large as in Lemma \ref{xav4}, we have
\begin{equation}\label{eq2.03}
\|\langle \xi\rangle^s e^{\tau\Phi(\xi)}\|_{L^2}^2 = \int_{|\xi| \leq M}\langle \xi\rangle^{2s} e^{2\tau\Phi(\xi)}d\xi + \int_{|\xi| > M}\langle \xi\rangle^{2s} e^{2\tau\Phi(\xi)}d\xi =:A +B.
\end{equation}

%If $1-p/2<s<1/2$, then $2-4(1+|s|)/p>0$. Thus, let $a$ such that
%\begin{align}\label{xav333}
%0<a<2-\dfrac{4(1+|s|)}{p}.
%\end{align}
Now, for $\tau\in (0, 1]$ and $a=\frac{p-2+2s}{p}>0$, one has 
\begin{equation}\label{eq2.04}
A\leq C_Me^{TC_M} \lesssim \frac1{\tau^a}.
\end{equation}

To obtain estimate for the high frequency part $B$, we use estimate \eqref{eq0.01} with $a= \frac{p-2+2s}{p}>0$ and $b= 2\Phi(\xi)<0$, to obtain
\begin{equation}\label{eq2.05}
B\leq \int_{|\xi| > M}\frac{|\xi|^{2s}}{\tau^a} \frac{(ae^{-1})^a}{|\Phi(\xi)|^a}d\xi \lesssim \int_{|\xi| > M}\frac1{|\xi|^{pa-2s}}\frac1{\tau^a} d\xi \lesssim \frac1{\tau^a},
\end{equation}
where in the last inequality $pa-2s >1$ has been used.
\end{proof}

\begin{proposition}\label{Prop2.7}
Let $1-\frac p2<s \le 0$, $p>3$, $0<T \leq 1$ and $t\in [0, T]$. Then we have
\begin{equation}\label{eq0.0}
\left\|\int_0^t V(t-t')(u_x v_x) (t') dt'\right\|_{Y_T^s}\lesssim  T^{\theta} \|u\|_{Y_T^s}\|v\|_{Y_T^s},
\end{equation}
where $\theta= \frac{p-2+2s}{2p}>0$.
\end{proposition}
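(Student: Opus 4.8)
The plan is to estimate the two pieces of the $Y_T^s$-norm separately, mirroring the argument for Proposition \ref{xav7} but replacing Lemma \ref{xav6} by Lemma \ref{xav6.6} for the $H^s$-component. The one structural difference from Proposition \ref{xav7} is that the nonlinearity $u_xv_x$ puts a derivative on \emph{each} factor; on the Fourier side I would therefore bound $\|(\widehat{u_x(t')}\ast\widehat{v_x(t')})(\xi)\|_{L^\infty_\xi}$ by Young's and Plancherel's inequalities as $\|u_x(t')\|_{L^2}\|v_x(t')\|_{L^2}$, and then invoke the definition of the $Y_T^s$-norm to get
\begin{equation}\label{plan-conv}
\|(\widehat{u_x(t')}\ast\widehat{v_x(t')})(\xi)\|_{L^\infty_\xi}\le t'^{-\frac{2(1+|s|)}{p}}\,\|u\|_{Y_T^s}\|v\|_{Y_T^s}.
\end{equation}
This is the exact analogue of \eqref{convinf}, with the exponent $\frac{2|s|}{p}$ upgraded to $\frac{2(1+|s|)}{p}$ because of the two derivatives.

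For the first component I would use Minkowski's inequality and the elementary splitting $\|fg\|_{L^2_\xi}\le\|f\|_{L^2_\xi}\|g\|_{L^\infty_\xi}$ as in \eqref{proxav1}, with the multiplier $\langle\xi\rangle^se^{(t-t')\Phi(\xi)}$, which Lemma \ref{xav6.6} controls by $(t-t')^{-\theta}$. Together with \eqref{plan-conv} this gives
\begin{equation}\label{plan-Hs}
\left\|\int_0^t V(t-t')(u_xv_x)(t')\,dt'\right\|_{H^s}\lesssim \|u\|_{Y_T^s}\|v\|_{Y_T^s}\int_0^t\frac{dt'}{(t-t')^{\theta}\,t'^{\frac{2(1+|s|)}{p}}}.
\end{equation}
After the change of variables $t'=t\tau$ the time integral factors as $t^{\beta}$ times a Beta-type integral, and a direct computation gives $\beta=1-\theta-\frac{2(1+|s|)}{p}=\theta$, so this component is bounded by $t^{\theta}\|u\|_{Y_T^s}\|v\|_{Y_T^s}\le T^{\theta}\|u\|_{Y_T^s}\|v\|_{Y_T^s}$.

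For the second component I would move the extra derivative inside the integral, replacing the multiplier $\langle\xi\rangle^s$ by $\xi$, and use estimate \eqref{eq0.9x2} of Lemma \ref{xav6}, namely $\|\xi e^{(t-t')\Phi(\xi)}\|_{L^2_\xi}\lesssim (t-t')^{-\frac{3^+}{2p}}$. The same convolution bound \eqref{plan-conv}, followed by multiplication by the weight $t^{\frac{1+|s|}{p}}$ and the rescaling $t'=t\tau$, produces a power $t^{\gamma}$ with $\gamma=1-\frac{3^+}{2p}-\frac{1+|s|}{p}=\theta+\frac{p-3^+}{2p}$. Since $p>3$ we have $\gamma>\theta>0$, and because $0<t\le T\le1$ this term is again dominated by $T^{\theta}$. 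Collecting both estimates and taking the supremum over $t\in[0,T]$ yields \eqref{eq0.0}.

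The point to watch, and the place where the hypothesis is sharp, is the convergence of these two time integrals. Near $t'=t$ the factors $(t-t')^{-\theta}$ and $(t-t')^{-3^+/(2p)}$ are integrable exactly because $\theta<1$ and $3^+<2p$, the latter using $p>3$; near $t'=0$ the factor $t'^{-2(1+|s|)/p}$ is integrable precisely when $\frac{2(1+|s|)}{p}<1$, that is $|s|<\frac p2-1$, which is the assumption $s>1-\frac p2$. Thus the endpoint regularity $s>1-\frac p2$ is exactly what tames the singularity at $t'=0$ created by the two derivatives landing on $u$ and $v$, and I expect this endpoint integrability to be the only genuinely delicate step.
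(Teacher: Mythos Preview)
Your proposal is correct and follows essentially the same approach as the paper: the paper also bounds the convolution by \eqref{plan-conv}, invokes Lemma \ref{xav6.6} for the $H^s$-component and \eqref{eq0.9x2} for the weighted $\partial_x$-component, performs the substitution $t'=t\tau$, and arrives at the same Beta-type integrals with the same exponents $\theta$ and $\frac{2p+2s-5^+}{2p}$. Your integrability check at $t'=0$ and $t'=t$ matches the paper's hypothesis tracking exactly.
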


\begin{proof}
We start considering the $H^s$ part of the $Y_T^s$-norm. Using the definition of $V(t)$ and Minkowski's inequality, we have
\begin{equation}\label{eq2.01}
\begin{split}
\left\|\int_0^t V(t-t')(u_x v_x) (t')dt'\right\|_{H^s} & \le \int_0^t \| \langle \xi \rangle^s e^{(t-t')\Phi(\xi)} (\widehat{u_x(t')}\ast \widehat{v_x(t')})dt'\|_{L_\xi^2}\\
& \le \int_0^t \| \langle \xi \rangle^s e^{(t-t')\Phi(\xi)}\|_{L_\xi^2} \|(\widehat{u_x(t')}\ast \widehat{v_x(t')})(\xi)\|_{L_\xi^\infty}dt'.
\end{split}
\end{equation}

The Young's  inequality, Plancherel identity and definition of $Y_T^s$ norm yield
\begin{equation}\label{eq2.02}
\|(\widehat{u_x(t')}\ast \widehat{v_x(t')})(\xi)\|_{L_\xi^\infty} \le t'^{-\frac{2(1+|s|)}{p}}\|u\|_{Y_T^s}\|v\|_{Y_T^s}.
\end{equation}

Using \eqref{eq0.99} and   \eqref{eq2.02} in \eqref{eq2.01}, we get
 \begin{equation}\label{eq2.06}
\left\|\int_0^t V(t-t')(u_x v_x) (t')dt'\right\|_{H^s}
\lesssim \|u\|_{Y_T^s}\|v\|_{Y_T^s} \int_0^t \dfrac{1}{|t-t'|^{\frac a2}\,|t'|^{\frac{2(1+|s|)}{p}} } dt'.
\end{equation}

Making a change of variables $t'= t\tau$, one obtains
\begin{equation}\label{eq2.07}
\left\|\int_0^t V(t-t')(u_x v_x) (t')dt'\right\|_{H^s}
 \lesssim  t^{1-\frac a2-\frac{2(1+|s|)}{p}}\, \|u\|_{Y_T^s}\|v\|_{Y_T^s}\int_0^1 \dfrac{1}{|1-\tau|^{\frac a2}\,|\tau|^{\frac{2(1+|s|)}{p}} } d\tau.
\end{equation}

For our choice of $a= \frac{p-2+2s}{p}$ and  $1/2>s> 1-\frac p2$ the integral in the RHS of \eqref{eq2.07} is finite, so we deduce that
\begin{equation}\label{eq2.08}
\left\|\int_0^t V(t-t')(u_x v_x) (t')dt'\right\|_{H^s}
 \lesssim  t^{\frac{p-2+2s}{2p}}\, \|u\|_{Y_T^s}\|v\|_{Y_T^s}.
\end{equation}

Now, we move to estimate the second part of the $Y_T^s$-norm. 
\begin{equation}\label{eq2.09}
\begin{split}
 \left\|\int_0^t\partial_x V(t-t')(u_x v_x) (t')dt'\right\|_{L^2}&\leq \int_0^t \|\xi e^{(t-t')\Phi(\xi)}\widehat{u_x(t')}\ast \widehat{v_x(t')} \|_{L^2}dt'\\
&\leq \int_0^t \|\widehat{u_x(t')}\ast \widehat{v_x(t')} \|_{L^{\infty}}\|\xi e^{(t-t')\Phi(\xi)}\|_{L^2}dt'.
\end{split}
\end{equation}

We have that  $\|\widehat{u_x(t)}\ast \widehat{v_x(t)} \|_{L^{\infty}}\leq t^{-\frac{2(1+|s|)}{p}}\|u\|_{Y_T^s}\|v\|_{Y_T^s}$. Taking $a=\frac{3^+}{2p}$, from \eqref{eq0.9x2},  one gets $\|\xi e^{\tau\Phi(\xi)}\|_{L^2}\lesssim \frac1{\tau^a}$. So, from \eqref{eq2.09}, one can deduce
\begin{equation}\label{eq2.010}
 \left\|\int_0^t \partial_xV(t-t')(u_x v_x) (t')dt'\right\|_{L^2}
\lesssim \|u\|_{Y_T^s}\|v\|_{Y_T^s}  \int_0^t \dfrac{1}{|t-t'|^{a}\,|t'|^{\frac{2(1+|s|)}{p}} } dt'.
\end{equation}

Making a change of variables $t'= t\tau$, one obtains from \eqref{eq2.010}
\begin{equation}\label{eq2.00}
t^{\frac{(1+|s|)}{p}}\left\|\int_0^t \partial_xV(t-t')(u_x v_x) (t')dt'\right\|_{L^2}\lesssim  t^{1-\frac{(1+|s|)}{p}-a}\|u\|_{Y_T^s}\|v\|_{Y_T^s} 
\int_0^1 \dfrac{1}{|1-\tau|^{a}\,|\tau|^{\frac{2(1+|s|)}{p}} } d\tau.
\end{equation}

For our choice of $a= \frac{3^+}{2p}$ and $s>1-\frac p2$  the integral in the RHS of \eqref{eq2.00} is finite.  Therefore, from \eqref{eq2.00}, we obtain
\begin{equation}\label{eq2.001}
\begin{split}
t^{\frac{1+|s|}{p}}\left\|\int_0^t \partial_xV(t-t')(u_x v_x) (t')dt'\right\|_{L^2}&\lesssim  t^{\frac{2p+2s-5^+}{2p}}\|u\|_{Y_T^s}\|v\|_{Y_T^s}\\
& \lesssim  t^{\frac{p-2+2s}{2p}}\|u\|_{Y_T^s}\|v\|_{Y_T^s}.
\end{split}
\end{equation}

Combining \eqref{eq2.08} and \eqref{eq2.001} we get the required estimate \eqref{eq0.0}.
\end{proof}

The following results deal with gain of regularity of the nonlinear part.
\begin{proposition}\label{xav7smoth}
Let $-\frac p2<s$, $p> 3$, $0 \le \mu < \frac p2$. If
\begin{equation}\label{xav3smoth}
\|f\|_{\mathcal{Z}_T^s}:=\sup_{t\in(0,T]}\Big\{\|f(t)\|_{H^s}+t^{\frac{|s|}{p}}\|f(t)\|_{L^2}\Big\}< \infty, 
\end{equation}
then  the application
\begin{equation}\label{eq0.9x3smoot}
t\mapsto \mathcal{L}(f)(t):=\int_0^t V(t-t')\partial_x(f^2) (t') dt',\qquad 0\le t\le T\le 1,
\end{equation}
is continuous from $[0, T]$ to  $H^{s+\mu}$. 
\end{proposition}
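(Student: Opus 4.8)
The plan is to follow the scheme of Proposition~\ref{xav7}, but to let the dissipation in $V(t)$ pay for the $\mu$ extra derivatives. Writing the Duhamel term on the Fourier side,
\[
\widehat{\mathcal{L}(f)(t)}(\xi)=\int_0^t e^{i(t-t')\xi^3+(t-t')\Phi(\xi)}\,i\xi\,\big(\widehat{f(t')}\ast\widehat{f(t')}\big)(\xi)\,dt',
\]
Minkowski's inequality followed by the Hölder splitting $L^2_\xi\times L^\infty_\xi$, exactly as in \eqref{proxav1}, gives
\[
\|\mathcal{L}(f)(t)\|_{H^{s+\mu}}\le \int_0^t \big\|\xi\langle\xi\rangle^{s+\mu}e^{(t-t')\Phi(\xi)}\big\|_{L^2_\xi}\,\big\|(\widehat{f(t')}\ast\widehat{f(t')})(\xi)\big\|_{L^\infty_\xi}\,dt'.
\]
For the convolution factor I would reuse the computation of \eqref{convinf} without change: Young's inequality together with the definition \eqref{xav3smoth} of the $\mathcal{Z}_T^s$--norm yields $\|\widehat{f(t')}\ast\widehat{f(t')}\|_{L^\infty_\xi}\le \|f(t')\|_{L^2}^2\le t'^{-2|s|/p}\|f\|_{\mathcal{Z}_T^s}^2$.

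The one genuinely new ingredient is the weighted semigroup bound carrying the gain. I would invoke Lemma~\ref{xav6} with $s$ replaced by $s+\mu$, which is licit since $s+\mu>-\tfrac p2$ and $p>3$, to obtain $\big\|\xi\langle\xi\rangle^{s+\mu}e^{\tau\Phi(\xi)}\big\|_{L^2_\xi}\lesssim \tau^{-(\frac12+\frac{s+\mu}{p})}$; the mechanism behind this is that $|\Phi(\xi)|\ge\tfrac{|\xi|^p}{2}$ for $|\xi|\ge M$ (Lemma~\ref{xav4}) together with $p>3$ force the high--frequency integral to converge even after the weight has been raised by $\mu$, so the $\mu$ derivatives only worsen the power of $\tau$. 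Inserting the two bounds and rescaling $t'=t\tau$ as in \eqref{ea1.23}, I arrive at
\[
\|\mathcal{L}(f)(t)\|_{H^{s+\mu}}\lesssim t^{\delta}\,\|f\|_{\mathcal{Z}_T^s}^2\int_0^1 (1-\tau)^{-(\frac12+\frac{s+\mu}{p})}\,\tau^{-\frac{2|s|}{p}}\,d\tau,\qquad \delta:=1-\Big(\tfrac12+\tfrac{s+\mu}{p}\Big)-\tfrac{2|s|}{p}.
\]
The Beta integral is finite because $\mu<\tfrac p2$ keeps the first exponent strictly below $1$ while $s>-\tfrac p2$ keeps the second strictly below $1$; in particular $\mathcal{L}(f)(t)\in H^{s+\mu}$ for every $t\in(0,T]$, with norm controlled by $t^{\delta}\|f\|_{\mathcal{Z}_T^s}^2$.

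Continuity would then be settled in two parts. Since $\mathcal{L}(f)(0)=0$, continuity at the origin reduces to showing $\|\mathcal{L}(f)(t)\|_{H^{s+\mu}}\to0$ as $t\to0^+$, which the bound above delivers as soon as $\delta>0$. For an interior instant $t_0\in(0,T]$ I would write
\[
\mathcal{L}(f)(t)-\mathcal{L}(f)(t_0)=\int_{t_0}^{t}V(t-t')\partial_x(f^2)(t')\,dt'+\int_0^{t_0}\big[V(t-t')-V(t_0-t')\big]\partial_x(f^2)(t')\,dt',
\]
bounding the first integral by the same kernel estimate over a shrinking interval on which $t'$ stays away from $0$, and sending the second to $0$ by the strong continuity of $V(\cdot)$ on $H^{s+\mu}$ (Lemma~\ref{lem-smooth}) together with dominated convergence, the integrable majorant being the one just produced.

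The step I expect to be the genuine obstacle is securing $\delta>0$. Unlike the bilinear estimates of Propositions~\ref{xav7} and~\ref{Prop2.7}, in which no smoothing is claimed, here the gain enters the kernel singularity additively through $s+\mu$, so one must verify that, after paying the data singularity $t'^{-2|s|/p}$, a strictly positive power of $t$ still survives on the admissible range of $(s,\mu)$. This is exactly where $p>3$ is indispensable: the convergence of the high--frequency integral underlying Lemma~\ref{xav6} degenerates as $p\downarrow3$, so for $p=3$ the dissipation can only absorb the derivative $\partial_x$ carried by the nonlinearity and has nothing left to invest in extra regularity, whereas for $p>3$ it is precisely the surplus decay that is converted into the $\mu$ gained derivatives.
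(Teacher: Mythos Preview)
Your argument for the finiteness of $\|\mathcal{L}(f)(t)\|_{H^{s+\mu}}$ is exactly the paper's treatment of the case $s\le 0$: invoke Lemma~\ref{xav6} at index $s+\mu$ and pair it with the convolution bound \eqref{convinf}. The continuity argument at an interior $t_0>0$ is likewise the paper's split followed by dominated convergence.

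There are two points where you diverge from the paper. First, for $s>0$ the Beta integral need not converge: the kernel exponent is $\tfrac12+\tfrac{s+\mu}{p}$, which is $\ge 1$ whenever $s+\mu\ge\tfrac p2$, and this is permitted by the hypotheses (only $\mu<\tfrac p2$ is assumed, with no upper bound on $s$). The paper handles $s\ge 0$ separately: it uses the pointwise inequality $\langle\xi\rangle^{s}\le\langle\xi-\xi_1\rangle^{s}\langle\xi_1\rangle^{s}$ to push the weight $\langle\xi\rangle^{s}$ onto the convolution (bounding it by $\|f(t')\|_{H^s}^2$), so that only $\langle\xi\rangle^{\mu}$ remains on the kernel and Lemma~\ref{xav6} at index $\mu$ gives exponent $\tfrac12+\tfrac{\mu}{p}<1$.

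Second, your worry about $\delta>0$ is well placed but wrongly resolved. For $s\le 0$ one computes $\delta=\tfrac12+\tfrac{s-\mu}{p}$, which is \emph{not} positive on the full admissible range (take $s$ close to $-\tfrac p2$ and $\mu$ close to $\tfrac p2$). The hypothesis $p>3$ does not help here: in Lemma~\ref{xav6} it is used only to make the high-frequency integral $\int_{|\xi|>M}|\xi|^{2-p}\,d\xi$ converge, not to control the resulting power of $\tau$. The paper never invokes $\delta>0$; for $t_0>0$ it argues by dominated convergence, the integrable majorant being precisely the one produced in the finiteness step. (Strictly speaking, neither argument secures continuity at $t_0=0$ in $H^{s+\mu}$ when $\delta\le 0$, but the application in the proof of Theorem~\ref{teorp-1} only uses $\mathcal{L}(f)\in C((0,T];H^{s+\mu})$, so this is immaterial.)
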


\begin{proof}
We start by proving that $\mathcal{L}(f)(t) \in H^{s+\mu}(\R)$ for all $f$ such that $\|f\|_{ \mathcal{Z}_T^s}<\infty$.
We consider two different cases

\noindent
{\bf Case I, $s\ge 0$:} Let $0< t\le T\le 1$, since $\langle \xi \rangle^{s}\le \langle \xi -x\rangle^{s}\langle x \rangle^{s}$ and $f\in Z_T^s$ we have
\begin{equation}
\begin{split}
\|\mathcal{L}(f)(t)\|_{H^{s+\mu}} &=\|\langle \xi \rangle^{s+\mu}\int_0^{t}\left( e^{(t-t')\Phi(\xi)} \right) \,i \xi \widehat{f} * \widehat{f}(\xi,t')dt'\|_{L^2}\\
&  \le \int_0^{t} \|\langle \xi \rangle^{\mu}\,\xi\,\left( e^{(t-t')\Phi(\xi)} \right) \|_{L^2}  \sup_{t' \in (0,T)}\|f(t') \|_{H^s}^2 dt'\\
&\lesssim  \|f \|_{\mathcal{Z}_T^s}^2  \int_0^{t}  \dfrac{1}{|t-t'|^{\frac12 +\frac{\mu}p}}\, dt' <\infty.
\end{split}
\end{equation}
where the definition of  $\mathcal{Z}_T^s$-norm, Minkowski's inequality and  inequality \eqref{eq0.9x1} from Lemma \ref{xav6} are used.

\noindent
{\bf Case II, $s\le 0$:} Similarly as in the proof of Proposition \ref{xav7}, we obtain
\begin{equation}
\begin{split}
\|\mathcal{L}(f)(t)\|_{H^{s+\mu}} &=\|\langle \xi \rangle^{s+\mu}\int_0^{t}\left( e^{(t-t')\Phi(\xi)} \right) \,i \xi \widehat{f} * \widehat{f}(\xi,t')dt'\|_{L^2}\\
&  \le \int_0^{t} \|\langle \xi \rangle^{s+\mu}\,\xi\,\left( e^{(t-t')\Phi(\xi)} \right) \|_{L^2}  \dfrac{1}{|t'|^{\frac{2|s|}{p}}} \|f \|_{\mathcal{Z}_T^s}^2 dt'\\
&\lesssim  \|f \|_{\mathcal{Z}_T^s}^2  \int_0^{t}  \dfrac{1}{|t-t'|^{\frac12 +\frac{s+\mu}p}}\, dt' <\infty.
\end{split}
\end{equation}

Now we  move to prove the continuity.
Let $t_0\in [0,T]$, fixed  and let $f$ such that $\|f\|_{ Z_T^s} <\infty$, we will shows that
\begin{equation}
\lim_{t \to t_0}\|\mathcal{L}(f)(t)-\mathcal{L}(f)(t_0)\|_{H^{s+\mu}}=0
\end{equation}
We use \eqref{eq0.9x3smoot} and the additive property of the integral, to get for $t \in [0,T]$ that
\begin{equation}
\begin{split}
\|\mathcal{L}(f)(t)&-\mathcal{L}(f)(t_0)\|_{H^{s+\mu}}
=\|\int_0^{t_0} V(t_0-t')\partial_x(f^{2})(t')d\tau-\int_0^t V(t-t')\partial_x(f^{2})(t')dt' \|_{H^{s+\mu}}\\
\le  &
\|\int_0^{t}\left( V(t_0-t')-V(t-t') \right)\partial_x(f^{2})(t')dt'\|_{H^{s+\mu}}+\|\int_{t}^{t_0} V(t-t') \partial_x(f^{2})(t')dt'\|_{H^{s+\mu}}\\
=& I_1(t,t_0)+I_2(t,t_0).
\end{split}
\end{equation}
We consider the first term 
\begin{equation}
\begin{split}
I_1(t,t_0)= & \|\langle \xi \rangle^{s+\mu}\int_0^{t}\left( e^{(t_0-t')\Phi(\xi)}-e^{(t-t')\Phi(\xi)} \right) \,i \xi \widehat{f} * \widehat{f}(\xi,t')dt'\|_{L^2}.
\end{split}
\end{equation}
As $( e^{(t_0-t')\Phi(\xi)}-e^{(t-t') })\to 0$ if $t \to t_0$, using the Lebesgue's Dominated Convergence Theorem we have that 
$$
I_1(t,t_0) \to 0, \qquad \textrm{if}\quad t\to t_0.
$$
Analogously, as 
$$
\int_{0}^{t_0} \|V(t-t') \partial_x(f^{2})(t')dt'\|_{H^{s+\mu}} <\infty
$$
we also have
$$
I_2(t,t_0) \to 0, \qquad \textrm{if}\quad t\to t_0,
$$
and this completes the proof.
\end{proof}

The next result follows by using \eqref{eq0.99} from Lemma \ref{xav6.6} and the procedure applied in Proposition \ref{Prop2.7}.
\begin{proposition}\label{xav7sm}
Let $1-\frac p2<s$, $p> 3$, $0 \le \mu < \frac p2$. If
\begin{equation}\label{xav3sm}
\|f\|_{\tilde{\mathcal{Z}}_T^s}:=\sup_{t\in(0,T]}\Big\{\|f(t)\|_{H^s}+t^{\frac{1+|s|}{p}}\|\partial_xf(t)\|_{L^2}\Big\}< \infty, 
\end{equation}
then the application
\begin{equation}\label{eq0.9x3sm}
t\mapsto\mathcal{L}(f)(t):=\int_0^t V(t-t')(f_x)^2 (t') dt',\qquad 0\le t\le T\le 1,
\end{equation}
is continuous from  $[0, T]$ to  $H^{s+\mu}$. 
\end{proposition}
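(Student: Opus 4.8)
The plan is to reproduce, almost line for line, the two-step scheme of Proposition~\ref{xav7smoth} — first the pointwise-in-time membership $\mathcal{L}(f)(t)\in H^{s+\mu}$, then the continuity of $t\mapsto\mathcal{L}(f)(t)$ — replacing the bilinear input by the one used for the $(u_xv_x)$ nonlinearity in Proposition~\ref{Prop2.7}, and controlling the semigroup factor by Lemma~\ref{xav6.6} in place of Lemma~\ref{xav6}. Since $|e^{i(t-t')\xi^3}|=1$, the dispersive phase is irrelevant to every $L^2_\xi$ computation, so I work only with $e^{(t-t')\Phi(\xi)}$.

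\textbf{Membership.} Writing $\widehat{(f_x)^2}=\widehat{f_x}\ast\widehat{f_x}$, I apply Minkowski's inequality to move the $H^{s+\mu}$-norm inside $\int_0^t$ and then Hölder in $\xi$ exactly as in \eqref{eq2.01}, splitting off an $L^2_\xi$ factor carrying $\langle\xi\rangle^{s+\mu}e^{(t-t')\Phi(\xi)}$ and an $L^\infty_\xi$ factor carrying the convolution. The convolution is handled precisely as in \eqref{eq2.02}: Young's and Plancherel's inequalities together with the definition \eqref{xav3sm} give $\|\widehat{f_x(t')}\ast\widehat{f_x(t')}\|_{L^\infty_\xi}\le\|\partial_x f(t')\|_{L^2}^2\le t'^{-2(1+|s|)/p}\|f\|_{\tilde{\mathcal{Z}}_T^s}^2$, which is the only step that uses the $\tilde{\mathcal{Z}}_T^s$-structure. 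For the semigroup factor I invoke \eqref{eq0.99} of Lemma~\ref{xav6.6} with $s$ replaced by $s+\mu$ (admissible since $s+\mu>1-\tfrac p2$), obtaining $\|\langle\xi\rangle^{s+\mu}e^{(t-t')\Phi}\|_{L^2_\xi}\lesssim|t-t'|^{-\beta}$ with $\beta=\tfrac{p-2+2(s+\mu)}{2p}$. After the substitution $t'=t\tau$ the whole expression reduces to $t^{\,1-\beta-\gamma}\|f\|_{\tilde{\mathcal{Z}}_T^s}^2\int_0^1(1-\tau)^{-\beta}\tau^{-\gamma}\,d\tau$ with $\gamma=\tfrac{2(1+|s|)}p$, so that $\mathcal{L}(f)(t)\in H^{s+\mu}$ for every $t\in(0,T]$.

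The crux is the finiteness of this Beta-type integral, and it is exactly here that the hypotheses enter: this is the step I expect to be the main obstacle. Integrability at $\tau=1$ requires $\beta<1$, i.e. $2(s+\mu)<p+2$, which is automatic in the relevant range $s\le 0$, $0\le\mu<\tfrac p2$; integrability at $\tau=0$ requires $\gamma<1$, i.e. $1+|s|<\tfrac p2$, which for $s\le0$ is precisely the sharp hypothesis $s>1-\tfrac p2$. Thus the two assumptions on $s$ and $\mu$ are exactly what make both singularities of the kernel integrable. (For $s\ge0$ one distributes $\langle\xi\rangle^{s}$ as in Case~I of Proposition~\ref{xav7smoth} or simply invokes the higher-regularity theory of \cite{XC-MP1}; this range is not the one of interest here.)

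\textbf{Continuity.} With membership in hand I mimic the final part of Proposition~\ref{xav7smoth}. Fixing $t_0\in[0,T]$ and taking $t$ near $t_0$, I use \eqref{eq0.9x3sm} and additivity of the integral to write $\mathcal{L}(f)(t)-\mathcal{L}(f)(t_0)=I_1(t,t_0)+I_2(t,t_0)$, where $I_1$ collects the difference $\big(e^{(t_0-t')\Phi}-e^{(t-t')\Phi}\big)$ of semigroups integrated over the common interval and $I_2$ is the integral over the thin interval between $t$ and $t_0$. In $I_1$ the integrand tends to $0$ pointwise as $t\to t_0$ and is dominated by the integrable majorant produced in the membership step, so $I_1\to0$ by the dominated convergence theorem; $I_2\to0$ because it is the tail of the finite integral bounding $\mathcal{L}(f)(t_0)$. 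This yields continuity into $H^{s+\mu}$, and everything else is a routine transcription of Propositions~\ref{Prop2.7} and~\ref{xav7smoth}.
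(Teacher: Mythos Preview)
Your proposal is correct and follows essentially the same approach as the paper, which merely indicates that the result ``follows by using \eqref{eq0.99} from Lemma~\ref{xav6.6} and the procedure applied in Proposition~\ref{Prop2.7}.'' You have in fact supplied more detail than the paper itself, correctly combining the membership-plus-continuity scheme of Proposition~\ref{xav7smoth} with the $(f_x)^2$-specific ingredients (the bound \eqref{eq2.02} and the semigroup estimate \eqref{eq0.99} with $s$ shifted to $s+\mu$), and your verification that the Beta-type integral is finite precisely under the hypotheses $s>1-\tfrac p2$ and $\mu<\tfrac p2$ is the heart of the matter.
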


%%%%%%%%%%%%%%%%%%%%%%%%%%%%%%%%%%%%%%%%%%%%%%%%%%%%%%%%%%%%%%%%%%%%%%%%%%%%%%
\section{Proof of the well-posedness result}\label{sec-3}
%%%%%%%%%%%%%%%%%%%%%%%%%%%%%%%%%%%%%%%%%%%%%%%%%%%%%%%%%%%%%%%%%%%%%%%%%%%%%%%

This section is devoted to provide proofs of the local well-posedness results stated in Theorems \ref{teorp-1} and \ref{teorp}.
\begin{proof}[Proof of Theorem \ref{teorp-1}]
We consider the IVP \eqref{eq:hs} in its equivalent integral form
\begin{equation}\label{int1} 
v(t)=V(t)v_{0}- \int_{0}^{t}V(t-t')(v^{2})_x(t')dt',
\end{equation}
where 
$V(t)$ is the semigroup associated with the linear part given by (\ref{gV}).

We define an application
\begin{equation}\label{int2}
  \Psi(v)(t)= V(t)v_0-\int_0^t V(t-t')(v^{2})_x(t')dt'.
\end{equation}

 For $-\frac p2\leq s \leq  0$, $r>0$ and $0<T\leq 1$, let us define a ball
\begin{align*}
  B_r^T= \{f\in X_T^s ; \,\,\|f\|_{X_T^s}\leq r \}.
\end{align*}
We will prove that there exists $r>0$ and $0<T\leq 1$ such that the application  $\Psi$ maps  $B_r^T $ into 
$B_r^T$ and is a contraction. Let $v\in B_r^T$.
By using Lemma \ref{xav5} and Proposition \ref{xav7}, we get
\begin{equation}\label{eq3.5}
  \|\Psi(v)\|_{X_T^s} \leq  c\|v_0\|_{H^s}+c\,T^{\alpha} \|v\|_{X_T^s}^2,
\end{equation}
where $\alpha= \frac{2s+p}{2p}>0$. 

 Now, using the definition of $B_r^T$, one obtains
 \begin{equation}\label{eq3.8}
  \|\Psi(v)\|_{X_T^s}\leq \frac{r}{4}+ cT^\alpha r^{2}\leq  \frac{r}{2},
\end{equation}
where we have chosen $r=4c\|v_0\|_{H^s}$ and $cT^\alpha r=1/4$.
Therefore, from \eqref{eq3.8} we see that the application $\Psi$ maps $B_r^T$ into itself. A
similar argument proves that $\Psi$ is a contraction.  Hence $\Psi$ has a fixed point $v$ which is a  solution of the IVP (\ref{eq:hs}) such that $v \in C([0,T], H^s(\R))$. The smoothness of the solution map $v_0\mapsto v$ is a consequence of the contraction mapping principle using Implicit Function Theorem (for details see \cite{kpv1:kpv1}).

For the regularity part, we have from Lemma \ref{lem-smooth} that the linear part is in $ C([0, \infty); H^s(\R))\cap C((0, \infty); H^{\infty}(\R))$. Proposition \ref{xav7smoth} shows that the nonlinear part is in $C((0, T]); H^{s+\mu}(\R))$, $\mu>0$. Combining these information, we have $v\in C([0, T]; H^s(\R))\cap C((0, T]; H^{s+\mu}(\R))$. Rest of the proof follows a standard argument, so we omit the details.
\end{proof}

\begin{proof}[Proof of Theorem \ref{teorp}]
The proof of this theorem is similar to the one presented for Theorem \ref{teorp-1}. Here, we will use the estimates from Lemma \ref{lem2.4} and Proposition \ref{Prop2.7}. So, we omit the details.
\end{proof}

%%%%%%%%%%%%%%%%%%%%%%%%%%%%%%%%%%%%%%%%%%%%%%%%%%%%%%%%%%%%%%%%%%%%%%%%%%%%%%
%\section{Smoothing}\label{sec-4}
%%%%%%%%%%%%%%%%%%%%%%%%%%%%%%%%%%%%%%%%%%%%%%%%%%%%%%%%%%%%%%%%%%%%%%%%%%%%%%%

%%%%%%%%%%%%%%%%%%%%%%%%%%%%%%%%%%%%%%%%%%%%%%%%%%%%%%%%%%%%%%%%%%%%%%%%%%%%%%
\section{Ill-posedness result}\label{sec-5}
%%%%%%%%%%%%%%%%%%%%%%%%%%%%%%%%%%%%%%%%%%%%%%%%%%%%%%%%%%%%%%%%%%%%%%%%%%%%%%%

In this section we will use the ideas presented in \cite{MR} to prove the ill-posedness result stated in Theorem \ref{illposed-1} and \ref{illposed-2}. The idea is to prove that  there are no spaces $X_T^s$ and $Y_T^s$ that are continuously embedded em $C([0, T]; H^s(\R))$ on which a contraction mapping argument can be applied. We start with the following result.

\begin{proposition}\label{prop4.1}
Let $p\ge 2$, $s<-\frac p2$ and $T>0$. Then there does not exist  a space $X_T^s$ continuously embedded in $C([0, T]; H^s(\R))$ such that
\begin{equation}\label{eq4.01}
\|V(t)v_0\|_{X_T^s}\lesssim \|v_0\|_{H^s},
\end{equation}
\begin{equation}\label{eq4.02}
\|\int_0^tV(t-t')\partial_x(v(t'))^2 dt'\|_{X_T^s}\lesssim \|v\|_{X_T^s}^2.
\end{equation}

\end{proposition}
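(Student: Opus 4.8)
The plan is to argue by contradiction: assume such a space $X_T^s$ exists satisfying both \eqref{eq4.01} and \eqref{eq4.02}, and then exhibit a one-parameter family of initial data $v_0^N$ (indexed by a large frequency $N$) for which the bilinear estimate \eqref{eq4.02} forces a quantitative bound that blows up as $N\to\infty$. The mechanism is standard for this kind of dissipative-dispersive ill-posedness (following Molinet--Ribaud \cite{MR}): because $X_T^s$ is continuously embedded in $C([0,T];H^s(\R))$, the bilinear estimate in $X_T^s$ together with the linear estimate \eqref{eq4.01} yields, after composing with the embedding, a bilinear estimate of the form
\begin{equation}\label{plan1}
\Bigl\|\int_0^t V(t-t')\partial_x\bigl(V(t')v_0\bigr)^2\,dt'\Bigr\|_{H^s}\lesssim \|v_0\|_{H^s}^2,
\end{equation}
valid for each fixed $t\in(0,T]$. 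This is the concrete inequality I would aim to violate.

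First I would compute the left-hand side of \eqref{plan1} explicitly on the Fourier side. Writing the Duhamel integrand through \eqref{gV}, the double convolution of $\widehat{V(t')v_0}$ with itself produces, at frequency $\xi$, an integral over the resonance surface $\xi=\xi_1+\xi_2$ with the phase factor $e^{t\Phi(\xi)}$ pulled out and a remaining time integral
\begin{equation}\label{plan2}
\int_0^t e^{-t'\bigl(\Phi(\xi)-\Phi(\xi_1)-\Phi(\xi_2)\bigr)}\,e^{i t'(\xi^3-\xi_1^3-\xi_2^3)}\,dt'.
\end{equation}
The choice of test data is the crucial design step: I would take $\widehat{v_0^N}$ to be (essentially) the sum of two narrow bumps of height chosen so that $\|v_0^N\|_{H^s}\sim 1$, concentrated near two high frequencies $N$ and $-N+a$ (with $a$ of order one, or two frequencies both near $N$), so that the output frequency $\xi=\xi_1+\xi_2$ sits in a \emph{low}-frequency region where $\langle\xi\rangle^s$ is of order one while the input normalization $\|v_0^N\|_{H^s}\sim 1$ costs a factor $N^{-s}=N^{|s|}$ per bump. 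The dissipation exponent $\Phi(\xi_1)+\Phi(\xi_2)\sim -2N^p$ is large and negative, so the time integral \eqref{plan2} is governed by $\int_0^t e^{-ct'N^p}\,dt'\sim N^{-p}$ once $tN^p\gg 1$; combined with the derivative factor $\xi$ (of order one at the output) and the two normalization factors $N^{|s|}$, the left side of \eqref{plan1} is of size comparable to $N^{2|s|}\cdot N^{-p}=N^{2|s|-p}$, while the right side is $\sim 1$. Since $s<-\frac p2$ means $2|s|-p>0$, this quantity diverges as $N\to\infty$, contradicting \eqref{plan1}.

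The hard part will be making the heuristic size count in the previous paragraph rigorous: one must choose the bumps' supports and widths so that the resonance/phase interaction in \eqref{plan2} does not produce cancellation, and so that the region of $(\xi_1,\xi_2)$ contributing to the target output frequency is genuinely of positive measure with the stated lower bound. Concretely, I would localize $\widehat{v_0^N}$ to intervals of length $\sim 1$ around $\pm N$ chosen so that on the support the combined dispersive phase $\xi^3-\xi_1^3-\xi_2^3$ stays bounded (forcing $t'$-oscillation to be harmless), then bound \eqref{plan2} below by its real part to extract the $N^{-p}$ lower bound, and finally integrate $|\widehat{\mathcal{L}}|^2\langle\xi\rangle^{2s}$ over the order-one output window to obtain the lower bound $N^{2|s|-p}$ for the $H^s$ norm. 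The requirement $p\ge 2$ enters to guarantee the derivative loss and dissipation scaling balance in the intended way; I would check that the low-frequency localization is compatible with $p\ge 2$ and that the time $t$ can be chosen (depending on $N$, but staying in $(0,T]$) so that $tN^p\gg 1$ holds, which is automatic for $N$ large. Once the divergent lower bound is established for all large $N$ against the uniformly bounded right-hand side, the assumed estimates \eqref{eq4.01}--\eqref{eq4.02} cannot coexist with the embedding, completing the contradiction.
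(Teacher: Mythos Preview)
Your overall strategy is exactly the paper's: derive \eqref{plan1} from the hypotheses, pick $\widehat{v_0}$ to be normalized bumps at $\pm N$ so the output lands at low frequency, compute the Duhamel term explicitly, and show the $H^s$ norm grows like $N^{2|s|-p}$. Your size heuristic $N^{2|s|}\cdot N^{-p}$ is correct and matches the paper's final count $\|f\|_{H^s}^2\gtrsim N^{-4s-2p}$.

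There is, however, a genuine misconception in your execution plan. You write that you will choose the bumps so that the dispersive phase $\xi^3-\xi_1^3-\xi_2^3=3\xi\xi_1(\xi_1-\xi)$ ``stays bounded''. With bumps of width $\sim 1$ at $\pm N$ and output $|\xi|\sim 1$, this phase is of size $\sim N^2$, not $O(1)$; there is no way to make it bounded without shrinking the output window to width $\sim N^{-2}$, and doing so costs you a factor $N^{-3}$ in the $H^s$ lower bound (from $\int_{|\xi|\lesssim N^{-2}}\xi^2\,d\xi$), which would only yield a contradiction for $s<-\tfrac p2-\tfrac32$ rather than the full range $s<-\tfrac p2$.

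The paper does not try to kill the oscillation. It evaluates the time integral \eqref{plan2} exactly as
\[
\frac{e^{\,t[\Phi(\xi_1)+\Phi(\xi-\xi_1)]+3it\xi\xi_1(\xi_1-\xi)}-e^{\,t\Phi(\xi)}}{\Phi(\xi_1)-\Phi(\xi)+\Phi(\xi-\xi_1)+3i\xi\xi_1(\xi_1-\xi)} =:\frac{f}{g},
\]
and observes that on the interaction set the real part $\operatorname{Re}g\sim -N^p$ while the imaginary part $\operatorname{Im}g\sim N^2$; since $p\ge 2$ one has $|g|\sim N^p$. This is precisely where the hypothesis $p\ge 2$ enters, not in any ``derivative loss versus dissipation'' balance. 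For the numerator, $\operatorname{Re}f=e^{t[\Phi(\xi_1)+\Phi(\xi-\xi_1)]}\cos(\cdot)-e^{t\Phi(\xi)}$ is uniformly $\le -\tfrac12 e^{-t\gamma^p/2}$ for large $N$ (the first term is $O(e^{-tN^p})$), and $|\operatorname{Im}f|\le e^{-tN^p}$ is negligible. Hence $|\operatorname{Re}(f/g)|\gtrsim N^{-p}$ with a definite sign on the whole $\xi_1$-domain, and no cancellation occurs in the $\xi_1$-integral. Once you replace ``phase bounded'' by this dissipation-dominates-dispersion argument, your outline goes through and coincides with the paper's proof.
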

\begin{proof}
The proof follows a contradiction argument. If possible, suppose that there exists a space $X_T^s$ that is continuously embedded in $C([0, T]; H^s(\R))$ such that the estimates \eqref{eq4.01} and \eqref{eq4.02} hold true.  If we consider $v= V(t)v_0$, then from \eqref{eq4.01} and \eqref{eq4.02}, we get
\begin{equation}\label{eq4.03}
\|\int_0^tV(t-t')\partial_x[V(t')v_0]^2 dt'\|_{H^s}\lesssim \|v_0\|_{H^s}^2.
\end{equation}

The main idea to complete the proof  is to find an appropriate initial data $v_0$ for which the estimate \eqref{eq4.03} fails to hold whenever $s<-\frac p2$.

Let $N\gg 1$, $0< \gamma \ll 1$, $I_N:= [N, N+2\gamma]$ and  define an initial data via Fourier transform
\begin{equation}\label{eq4.2}
\widehat{v_0}(\xi):= N^{-s}\gamma^{-\frac12}\big[\chi_{\{I_N\}}(\xi) + \chi_{\{-I_N\}}(\xi)\big].
\end{equation}
A simple calculation shows that $\|v_0\|_{H^s}\sim 1$.

Now, we move to calculate the $H^s$ norm of $f(x,t)$, where
\begin{equation}\label{eq4.04}
f(x,t):=\int_0^tV(t-t')\partial_x[V(t')v_0]^2 dt'.
\end{equation}

Taking the Fourier transform in the space variable $x$, we get
\begin{equation}\label{eq4.05}
\begin{split}
\widehat{f(t)}(\xi)&=\int_0^t e^{i(t-t')\xi^3+(t-t')\Phi(\xi)}i\xi \left(\widehat{V(t')v_0} * \widehat{V(t')v_0}\right) (\xi) dt'\\
&= \int_0^t e^{i(t-t')\xi^3+(t-t')\Phi(\xi)}i\xi\int_{\R}\widehat{v_0}(\xi-\xi_1) \widehat{v_0}(\xi_1) e^{it'\xi_1^3+t'\Phi(\xi_1)+it'(\xi-\xi_1)^3+t'\Phi(\xi-\xi_1)}d\xi_1dt'\\
&=i\xi e^{it\xi^3+t\Phi(\xi)}\int_{\R}\widehat{v_0}(\xi-\xi_1) \widehat{v_0}(\xi_1)\int_0^t e^{it'[-\xi^3+\xi_1^3+(\xi-\xi_1)^3]+t'[\Phi(\xi_1)-\Phi(\xi)+\Phi(\xi-\xi_1)]}dt'd\xi_1.
\end{split}
\end{equation}

We have that
\begin{equation}\label{eq4.06}
\int_0^t e^{it'[3\xi\xi_1(\xi_1-\xi)]+t'[\Phi(\xi_1)-\Phi(\xi)+\Phi(\xi-\xi_1)]}dt' = 
\frac{e^{it3\xi\xi_1(\xi_1-\xi)+t[\Phi(\xi_1)-\Phi(\xi)+\Phi(\xi-\xi_1)]}-1}{\Phi(\xi_1)-\Phi(\xi)+\Phi(\xi-\xi_1)+i3\xi\xi_1(\xi_1-\xi)}.
\end{equation}

Now, inserting \eqref{eq4.06} in \eqref{eq4.05}, one obtains
\begin{equation}\label{eq4.07}
\widehat{f(t)}(\xi)
=i\xi e^{it\xi^3}\int_{\R}\widehat{v_0}(\xi-\xi_1) \widehat{v_0}(\xi_1)\frac{e^{it3\xi\xi_1(\xi_1-\xi)+t\Phi(\xi_1)+t\Phi(\xi-\xi_1)}-e^{t\Phi(\xi)}}{\Phi(\xi_1)-\Phi(\xi)+\Phi(\xi-\xi_1)+i3\xi\xi_1(\xi_1-\xi)}d\xi_1.
\end{equation}

Therefore,
\begin{equation}\label{eqil1}
\| f\|_{H^s}^2 \gtrsim \int_{-\gamma/2}^{\gamma/2}\langle \xi\rangle^{2s} \dfrac{\xi^2}{N^{4s} \gamma^2}
\left| \int_{K} \dfrac{e^{3it\xi \xi_1 (\xi_1-\xi) +t \Phi(\xi_1)+t \Phi(\xi-\xi_1)}-e^{t \Phi(\xi)}}{ \Phi(\xi_1)-\Phi(\xi)+\Phi(\xi-\xi_1)+3i\xi \xi_1 (\xi_1-\xi)}d\xi_1\right|^2 d\xi,
\end{equation}
where 
$$
K= \{ \xi_1; \quad\xi-\xi_1 \in I_N, \xi_1\in -I_N\}\cup \{ \xi_1;\quad \xi_1 \in I_N, \xi-\xi_1\in -I_N\}.
$$
We have that $|K|\geq \gamma$ and
\begin{equation}\label{eqil2}
|3\xi \xi_1 (\xi_1-\xi)| \approx N^2 \gamma.
\end{equation}

In order to estimate \eqref{eqil1} we consider two cases:
\\
{ \bf Case  1:\,\, $\xi-\xi_1 \in I_N, \xi_1\in -I_N.$} In this case
\begin{equation}\label{eqil3}
\begin{split}
 &|\Phi(\xi_1)-\Phi(\xi)+\Phi(\xi-\xi_1)|  = |-(-\xi_1)^p+|\xi|^p-(\xi-\xi_1)^p+ \Phi_1(\xi_1)-\Phi_1(\xi)+\Phi_1(\xi-\xi_1)|\\
 & \leq |-2(-1)^p\xi_1^p|+|(\xi^p-p\xi^{p-1}\xi_1+ \cdots +p(-1)^{p-1}\xi \xi_1^{p-1})+|\xi|^p+ \Phi_1(\xi_1)-\Phi_1(\xi)+\Phi_1(\xi-\xi_1)|.
 \end{split}
\end{equation}
Therefore, 
\begin{equation}\label{eqil4}
\begin{split}
 |\Phi(\xi_1)-\Phi(\xi)+\Phi(\xi-\xi_1)|
 & \leq C (N^p+ N^{r}) \le 2C N^p, \quad r<p.
 \end{split}
 \end{equation}

Similarly we obtain 
$|\Phi(\xi_1)-\Phi(\xi)+\Phi(\xi-\xi_1)|
  \geq C (N^p- N^{r}) \gtrsim N^p$,  $r<p$. 
	
Hence
\begin{equation}\label{xaveqil4}
 |\Phi(\xi_1)-\Phi(\xi)+\Phi(\xi-\xi_1)|
 \sim   N^p.
 \end{equation}
 
 \noindent

{ \bf Case 2:\,\, $\xi_1\in I_N, \xi-\xi_1 \in- I_N$.} In this case
\begin{equation}\label{eqil5}
\begin{split}
 &|\Phi(\xi_1)-\Phi(\xi)+\Phi(\xi-\xi_1)|  = |-\xi_1^p+|\xi|^p-(-1)^p (\xi-\xi_1)^p+ \Phi_1(\xi_1)-\Phi_1(\xi)+\Phi_1(\xi-\xi_1)|\\
 & \leq |-2\xi_1^p|+ |(-1)^p (\xi^p-p\xi^{p-1}\xi_1+ \cdots +p(-1)^{p-1}\xi \xi_1^{p-1})+ |\xi|^p+\Phi_1(\xi_1)-\Phi_1(\xi)+\Phi_1(\xi-\xi_1)|.
 \end{split}
\end{equation}
In this way,
\begin{equation}\label{eqil6}
\begin{split}
 |\Phi(\xi_1)-\Phi(\xi)+\Phi(\xi-\xi_1)|
 & \leq C (N^p+ N^{r}) \le 2 C N^p, \quad r<p,
 \end{split}
\end{equation}
and analogously $|\Phi(\xi_1)-\Phi(\xi)+\Phi(\xi-\xi_1)| \gtrsim N^p$. 

Therefore,
 \begin{equation}\label{xaveqil6}
\begin{split}
 |\Phi(\xi_1)-\Phi(\xi)+\Phi(\xi-\xi_1)|
 \sim N^p.
 \end{split}
\end{equation}

Similarly for any  $\xi_1 \in K $, we get, for large $N$
\begin{equation}\label{eqil7}
\begin{split}
\Phi(\xi_1)+ \Phi(\xi-\xi_1) & = - |\xi_1|^p- |\xi-\xi_1|^p +\Phi_1(\xi_1)+ \Phi_1(\xi-\xi_1)\le - 2 N^p+ C N^{r}, \quad r<p\\
& \le -N^{p}.
\end{split}
\end{equation}
Let
$$
\dfrac{f}{g}:=\dfrac{e^{3it\xi \xi_1 (\xi_1-\xi) +t \Phi(\xi_1)+t \Phi(\xi-\xi_1)}-e^{t \Phi(\xi)}}{ \Phi(\xi_1)-\Phi(\xi)+\Phi(\xi-\xi_1)+3i\xi \xi_1 (\xi_1-\xi)},
$$
then
\begin{equation}\label{realfg}
\left|\textrm{Re}\left\{\dfrac{f}{g}\right\}\right|= \dfrac{|\textrm{Re}f \,\textrm{Re}g+ \textrm{Im}f\,\textrm{Im}g|}{|g|^2}\geq \dfrac{|\textrm{Re}f \,\textrm{Re}g|}{|g|^2}- \dfrac{ |\textrm{Im}f\,\textrm{Im}g|}{|g|^2}.
\end{equation}
For $\gamma \ll 1$, one can obtain
 \begin{equation}\label{eqil8}
 \begin{split}
\textrm{Re}f=\textrm{Re} \,\,\left\{ e^{3it\xi \xi_1 (\xi_1-\xi) +t \Phi(\xi_1)+t \Phi(\xi-\xi_1)}-e^{t \Phi(\xi)}\right\} 
&\le  e^{t \Phi(\xi_1)+t \Phi(\xi-\xi_1)}-e^{-t\gamma^p/2}\\
& \le e^{-tN^p}-e^{-t\gamma^p/2}\\
& \le \frac{-e^{-t\gamma^p/2}}{2},
\end{split}
\end{equation}
and also
\begin{equation}\label{xaveqil8}
 \begin{split}
\textrm{Im}f=\textrm{Im} \,\,\left\{ e^{3it\xi \xi_1 (\xi_1-\xi) +t \Phi(\xi_1)+t \Phi(\xi-\xi_1)}-e^{t \Phi(\xi)}\right\} 
&\le  e^{t \Phi(\xi_1)+t \Phi(\xi-\xi_1)}\\
& \le e^{-tN^p}.
\end{split}
\end{equation}
 From \eqref{eqil2}, \eqref{xaveqil4} and \eqref{xaveqil6} we conclude that for any $\xi_1 \in K $, one has
 \begin{equation}\label{eqil9}
 |\textrm{Re}g|=|\Phi(\xi_1)-\Phi(\xi)+\Phi(\xi-\xi_1)| \sim N^p,\quad |\textrm{Im}g|=3|\xi \xi_1 (\xi_1-\xi)| \sim N^2\gamma.
 \end{equation}
Using \eqref{eqil9}, \eqref{eqil8}, \eqref{xaveqil8} and \eqref{realfg} considering $N$ very large, it follows that
\begin{equation}\label{realfg-1}
\begin{split}
\left|\textrm{Re}\left\{\dfrac{f}{g}\right\}\right| \gtrsim \dfrac{-e^{-t\gamma^p/2} N^p}{N^{2p}+\gamma^2N^4}-\dfrac{e^{-tN^p}}{N^{p}+\gamma N^2} \gtrsim \dfrac{-e^{-t\gamma^p/2} N^p}{N^{2p}+\gamma^2N^4}
\end{split}
\end{equation}
Combining \eqref{eqil1}, \eqref{eqil9} and \eqref{eqil8}, using that $|z| \ge -\textrm{Re}z$, we arrive
\begin{equation}
\|f\|_{H^s}^2 \gtrsim  \gamma^{-2}N^{-4s}\gamma \langle \gamma \rangle^{2s}\gamma^2\dfrac{e^{-t\gamma^p}N^{2p}}{(N^{2p}+\gamma^2 N^4)^2}\gamma.
\end{equation}
Taking $\gamma \sim 1$ and $N$ very large, we obtain
\begin{equation}
\|f\|_{H^s}^2 \gtrsim \left\{
\begin{aligned} N^{-4s-2p}, \quad \textrm{if $p \ge 2$},\\
N^{-4s+2p-8}, \quad \textrm{if $p \le 2$},
\end{aligned}
\right.
\end{equation}
and this is a contradiction if $-4s-2p >0$ for $p \ge 2$ and  if $-4s+2p-8>0$ for $p\le 2$ or equivalently $s<-\frac p2$ for $p \ge 2$ and $s<\frac p2 -2$ for $0\le p \le 2$.
\end{proof}

\begin{proof}[Proof of Theorem \ref{illposed-1}]
For $v_0\in H^s(\R)$, consider the Cauchy problem
\begin{equation}\label{ivp3}
\begin{cases}
v_t+v_{xxx}+\eta Lv+(v^{2})_x=0, \quad x \in \mathbb{R}, \; t\geq 0,\\
     v(x,0)=\epsilon v_0(x),
\end{cases}
\end{equation}
where $\epsilon >0$ is a parameter. The solution $v^{\epsilon}(x,t)$ of \eqref{ivp3} depends on the parameter $\epsilon$. We can write \eqref{ivp3} in the equivalent integral equation form as
\begin{equation}\label{int-1}
v^{\epsilon}(t)=\epsilon V(t)v_{0}-\int_{0}^{t}V(t-t')(v^{2})_x(t')dt',
\end{equation}
where, $V(t)$ is the unitary group describing the solution of the linear part of the IVP \eqref{ivp3}.

Differentiating $v^{\epsilon}(x,t)$ in \eqref{int-1} with respect $\epsilon$ and evaluating at $\epsilon =0$ we get
\begin{equation}\label{int-2}
\frac{\partial v^{\epsilon}(x,t)}{\partial \epsilon}\Big|_{\epsilon=0} = V(t)v_0(x) =:v_1(x)
\end{equation}
and
\begin{equation}\label{int-3}
\frac{\partial^2 v^{\epsilon}(x,t)}{\partial \epsilon^2}\Big|_{\epsilon=0} = 2\int_0^t V(t-t')\partial_x(v_1^2(x,t'))dt' =:v_2(x).
\end{equation}

If the flow-map is $C^2$ at the origin from $H^s(\R)$ to $C([-T, T];H^s(\R))$, we must have
\begin{equation}\label{eq-bilin}
\|v_2\|_{L_T^{\infty}H^s(\R)}\lesssim \|v_0\|_{H^s(\R)}^2.
\end{equation}

But from  Proposition \ref{prop4.1}  we have seen that the estimate \eqref{eq-bilin} fails to hold for $s<-\frac p2$ if we consider $v_0$ given by \eqref{eq4.2} and this completes the proof of the Theorem.
\end{proof}

%%%%%%%%%%%%%%%%%%%%%%%%%%%%%%%%%%%%%%%%%%%%%%%%%%%%%%%%%%%%%%%%%%%%%%%%%%%%%%%%%%%%%%%

Now, we move to prove an ill-posedness results to the IVP \eqref{eq:hs-1}
\begin{proposition}\label{prop4.1x}
Let $p\ge 2$, $s<1-\frac p2$ and $T>0$. Then there does not exist  a space $Y_T^s$ continuously embedded in $C([0, T]; H^s(\R))$ such that
\begin{equation}\label{eq4.01a}
\|V(t)u_0\|_{Y_T^s}\lesssim \|u_0\|_{H^s},
\end{equation}
\begin{equation}\label{eq4.02a}
\|\int_0^tV(t-t')(u_x(t'))^2 dt'\|_{Y_T^s}\lesssim \|u\|_{Y_T^s}^2.
\end{equation}
\end{proposition}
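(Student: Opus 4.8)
The plan is to mirror the contradiction argument used in Proposition~\ref{prop4.1}, adapting it to the modified nonlinearity $(u_x)^2$ and the space $Y_T^s$ whose natural threshold is shifted up by one derivative to $s=1-\frac p2$. Suppose, for contradiction, that a space $Y_T^s$ continuously embedded in $C([0,T];H^s(\R))$ exists and satisfies both \eqref{eq4.01a} and \eqref{eq4.02a}. Substituting $u=V(t)u_0$ and combining the two estimates with the embedding reduces everything to showing that the bilinear estimate
\begin{equation}\label{plan-bilin}
\Big\|\int_0^t V(t-t')\big(\partial_x[V(t')u_0]\big)^2\,dt'\Big\|_{H^s}\lesssim \|u_0\|_{H^s}^2
\end{equation}
must fail for $s<1-\frac p2$ when the data is suitably concentrated at high frequency.

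First I would take the same two-bump initial datum supported on $I_N\cup(-I_N)$ as in \eqref{eq4.2}, but with the normalization adjusted so that $\|u_0\|_{H^s}\sim1$ under the new scaling. Next I would compute $\widehat{f(t)}(\xi)$ exactly as in \eqref{eq4.05}--\eqref{eq4.07}, the only change being that each factor $\widehat{V(t')u_0}$ now carries an extra frequency factor $i\xi_1$ and $i(\xi-\xi_1)$ from the two $\partial_x$'s acting \emph{inside} the square, and the outer $i\xi$ of \eqref{eq4.05} is \emph{absent} since the nonlinearity is $(u_x)^2$ rather than $\partial_x(u^2)$. On the set $K$ where $\xi_1$ and $\xi-\xi_1$ are of size $N$, this replaces the outer $\xi^2\sim\gamma^2$ weight by an inner weight of size $N^2\cdot N^2=N^4$, so the bilinear expression gains $N^4$ relative to the Proposition~\ref{prop4.1} computation. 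The phase and symbol denominators $\Phi(\xi_1)-\Phi(\xi)+\Phi(\xi-\xi_1)\sim N^p$ and $3\xi\xi_1(\xi_1-\xi)\sim N^2\gamma$ are unchanged, so the real-part lower bound \eqref{realfg-1} carries over verbatim.

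Carrying the bookkeeping through, taking $\gamma\sim1$ and $N\to\infty$, the lower bound for $\|f\|_{H^s}^2$ should pick up exactly four extra powers of $N$ compared with Proposition~\ref{prop4.1}, yielding $\|f\|_{H^s}^2\gtrsim N^{-4s-2p+4}$ for $p\ge2$. This diverges precisely when $-4s-2p+4>0$, i.e. when $s<1-\frac p2$, giving the desired contradiction; the companion case $p\le2$ produces the shifted threshold in the same way. The proof then concludes just as in Theorem~\ref{illposed-1}: writing the $\epsilon$-dependent problem \eqref{ivp3} with nonlinearity $(u_x)^2$, differentiating the integral equation twice in $\epsilon$ at $\epsilon=0$ to identify the second Fréchet derivative with $2\int_0^t V(t-t')(\partial_x V(t')u_0)^2\,dt'$, and observing that $C^2$-smoothness of the flow-map would force \eqref{plan-bilin}, contradicting what we have just shown.

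The main obstacle I anticipate is purely in the careful frequency bookkeeping: I must verify that both derivatives land on the interior factors and that the resulting weight on $K$ is genuinely $\sim N^4$ uniformly (not degenerating on a subset of $K$), and that the real-part argument of \eqref{realfg}--\eqref{realfg-1} still isolates a non-cancelling contribution after the extra frequency factors are inserted. Once that weight tracking is confirmed, the remaining estimates are identical to those already established for Proposition~\ref{prop4.1}, so I would simply invoke them rather than reprove them.
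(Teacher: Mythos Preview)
Your proposal is correct and follows essentially the same route as the paper: take the identical two-bump datum $u_0=v_0$ from \eqref{eq4.2} (no renormalization is needed, since $\|v_0\|_{H^s}\sim1$ already), insert the extra interior frequency factors $\xi_1(\xi-\xi_1)\sim N^2$ coming from the two $\partial_x$'s, reuse the real-part lower bound verbatim, and read off $\|g\|_{H^s}^2\gtrsim N^{-4s-2p+4}$ for $p\ge2$. One small remark: your observation that the \emph{outer} factor $i\xi$ should be absent for the nonlinearity $(u_x)^2$ is correct; the paper's displayed formula for $\widehat{g(t)}(\xi)$ retains it (apparently a transcription slip), but since one takes $\gamma\sim1$ this has no effect on the final exponent, and your bookkeeping leads to the same contradiction threshold $s<1-\tfrac p2$. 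The final paragraph on the $\epsilon$-differentiation argument belongs to Theorem~\ref{illposed-2} rather than to this proposition, so you can omit it here.
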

\begin{proof}
Analogously as in the proof of Proposition \ref{prop4.1} we consider the same $v_0$ as defined in \eqref{eq4.2}, we take $u_0:=v_0$ and we calculate the $H^s$ norm of $g(x,t)$, where
\begin{equation}\label{eq4.04x1}
g(x,t):=\int_0^tV(t-t')[\partial_x V(t')u_0]^2 dt'.
\end{equation}
We have
\begin{equation*}%\label{eq4.07x2}
\widehat{g(t)}(\xi)
=i\xi e^{it\xi^3}\int_{\R}(\xi-\xi_1)\widehat{v_0}(\xi-\xi_1) \xi_1\widehat{u_0}(\xi_1)\frac{e^{it3\xi\xi_1(\xi_1-\xi)+t\Phi(\xi_1)+t\Phi(\xi-\xi_1)}-e^{t\Phi(\xi)}}{\Phi(\xi_1)-\Phi(\xi)+\Phi(\xi-\xi_1)+i3\xi\xi_1(\xi_1-\xi)}d\xi_1.
\end{equation*}
and
\begin{equation}\label{eqil.1}
\| g\|_{H^s}^2 \gtrsim \int_{-\gamma/2}^{\gamma/2}\langle \xi\rangle^{2s} \dfrac{\xi^2}{N^{4s} \gamma^2}
\left| \int_{K_\epsilon} \dfrac{\xi_1(\xi-\xi_1)e^{3it\xi \xi_1 (\xi_1-\xi) +t \Phi(\xi_1)+t \Phi(\xi-\xi_1)}-e^{t \Phi(\xi)}}{ \Phi(\xi_1)-\Phi(\xi)+\Phi(\xi-\xi_1)+3i\xi \xi_1 (\xi_1-\xi)}d\xi_1\right|^2 d\xi,
\end{equation}
where 
$$
K_\epsilon= \{ \xi_1; \quad\xi-\xi_1 \in I_N, \xi_1\in -I_N\}\cup \{ \xi_1;\quad \xi_1 \in I_N, \xi-\xi_1\in -I_N\}.
$$
Same way as in the proof of Proposition \ref{prop4.1}, we obtain
\begin{equation}
\|g\|_{H^s}^2 \gtrsim  \gamma^{-2}N^{-4s}\gamma \langle \gamma \rangle^{2s}\gamma^2\dfrac{N^4e^{-t\gamma^p}}{(N^{p}+\gamma N^2)^2}\gamma.
\end{equation}

Taking $p\ge 2$, $\gamma \sim 1$ and $N$ very large, we obtain
$$
\|g\|_{H^s}^2 \gtrsim N^{-4s-2p+4},
$$
and this is a contradiction if $-4s-2p+4 >0$ or equivalently $s<1-\frac p2$.
\end{proof}

\begin{proof}[Proof of Theorem \ref{illposed-2}]
For $v_0\in H^s(\R)$, consider the Cauchy problem
\begin{equation}\label{ivp4x}
\begin{cases}
u_t+u_{xxx}+\eta Lu+(u_x)^{2}=0, \quad x \in \mathbb{R}, \; t\geq 0,\\
     u(x,0)=\epsilon u_0(x),
\end{cases}
\end{equation}
where $\epsilon >0$ is a parameter. The solution $u^{\epsilon}(x,t)$ of \eqref{ivp4x} depends on the parameter $\epsilon$. We can write \eqref{ivp4x} in the equivalent integral equation form as
\begin{equation}\label{int-1x1}
u^{\epsilon}(t)=\epsilon V(t)u_{0}- \int_{0}^{t}V(t-t')(u_x)^{2}(t')dt',
\end{equation}
where, $V(t)$ is the unitary group describing the solution of the linear part of the IVP \eqref{ivp4x}.

Differentiating $u^{\epsilon}(x,t)$ in \eqref{int-1x1} with respect $\epsilon$ and evaluating at $\epsilon =0$ we get
\begin{equation}\label{int-2x2}
\frac{\partial u^{\epsilon}(x,t)}{\partial \epsilon}\Big|_{\epsilon=0} = V(t)u_0(x) =:u_1(x)
\end{equation}
and
\begin{equation}\label{int-3x3}
\frac{\partial^2 u^{\epsilon}(x,t)}{\partial \epsilon^2}\Big|_{\epsilon=0} = 2\int_0^t V(t-t')(\partial_xu_1(x,t'))^2dt' =:u_2(x).
\end{equation}

If the flow-map is $C^2$ at the origin from $H^s(\R)$ to $C([-T, T];H^s(\R))$, we must have
\begin{equation}\label{eq-bilinx4}
\|u_2\|_{L_T^{\infty}H^s(\R)}\lesssim \|u_0\|_{H^s(\R)}^2.
\end{equation}

But from  Proposition \ref{prop4.1x} we have that the estimate \eqref{eq-bilinx4} fails to hold for $s<1-\frac p2$ if we consider $u_0:=v_0$ given by \eqref{eq4.2} and this completes the proof of the Theorem.
\end{proof}

%%%%%%%%%%%%%%%%%%%%%%%%%%%%%%%%%%%%%%%%%%%%%%%%%%%%%%%%%%%%%%

\end{document}